\documentclass[12pt]{amsart}

\usepackage{amsrefs}
\usepackage{amssymb}
\usepackage{fancyhdr}
\usepackage{bbm}
\usepackage{xcolor}
\usepackage{hyperref}
\usepackage{titling}

\hypersetup{
  colorlinks=true,
  linkcolor=blue,
  citecolor=blue
}
\usepackage{mleftright}
\usepackage{amsaddr}

\setlength{\textwidth}{\paperwidth}
\addtolength{\textwidth}{-2.5in}
\calclayout

\newtheorem{theorem}{Theorem}
\newtheorem*{theorem*}{Theorem}
\newtheorem{lemma}[theorem]{Lemma}
\newtheorem{proposition}[theorem]{Proposition}
\newtheorem{claim}[theorem]{Claim}
\newtheorem{corollary}[theorem]{Corollary}

\newtheorem{maintheorem}{Theorem}

\theoremstyle{definition}

\newtheorem*{definition*}{Definition}
\newtheorem*{lemma*}{Lemma}
\usepackage{graphicx}
\usepackage{pstricks, enumerate, pst-node, pst-text, pst-plot}

\numberwithin{equation}{section}
\numberwithin{theorem}{section}

\newcommand{\R}{\mathbb{R}}

\newcommand{\Z}{\mathbb{Z}}

\usepackage{xparse}
\DeclareDocumentCommand\Pr{ m g }{\ensuremath{
    {   \IfNoValueTF {#2}
      {\mathbb{P}\mleft[{#1}\mright]}
      {\mathbb{P}\mleft[{#1}\middle\vert{#2}\mright]}%
    }
}}
\DeclareDocumentCommand\E{ m g }{\ensuremath{
    {   \IfNoValueTF {#2}
      {\mathbb{E}\mleft[{#1}\mright]}
      {\mathbb{E}\mleft[{#1}\middle\vert{#2}\mright]}%
    }
}}

\def\dd{\mathrm{d}}
\def\ee{\mathrm{e}}
\def\cP{\mathcal{P}}
\def\cPr{\mathrm{Prob}}
\def\cM{\mathcal{M}}
\def\F{\mathcal{F}}

\def\Q{\mathbb{Q}}
\def\C{\mathbb{C}}

\begin{document}

\title{On the origin of the Boltzmann Distribution}

\author{Fedor Sandomirskiy}
\address{Princeton University}
\author{Omer Tamuz}
\address{California Institute of Technology}

\thanks{Omer Tamuz was supported by a BSF award (\#2018397) and a National Science Foundation CAREER award (DMS-1944153).}

\date{\today}

\maketitle

\begin{abstract}

The family of Boltzmann distributions is used in statistical mechanics to describe the distribution of states in systems with a given temperature. We give a novel characterization of this family  
as the unique one satisfying independence for uncoupled systems. The theorem boils down to a statement about endomorphisms of the convolution semi-group of finitely supported probability measures on the natural numbers, or, alternatively, about endomorphisms of the multiplicative semi-group of polynomials with non-negative coefficients.
\end{abstract}

\section{Introduction}

In statistical mechanics, a system can be found in various possible states. A state refers to a complete microscopic description of the system which encodes all physically relevant variables, and in particular determines the system's energy.\footnote{Statistical mechanics serves only as a motivation for the mathematical question we study. For a brief introduction to the relevant physics concepts, see Section~2.1 of \cite{mezard2009information}. An uninterested reader may proceed directly to the ``Definitions and Results'' section.}
In an equilibrium with a given temperature, the distribution of states is given by the Boltzmann distribution: The probability that the system has energy $E$ is proportional to the number of states with energy $E$ times $\ee^{-\beta E}$, where $1/\beta$ is proportional to the temperature of the environment.
Formally, if $\mu$ is a probability measure on the reals describing the fraction of states with given energy---i.e., for measurable $A \subseteq \R$, the fraction of states with energy in $A$ is $\mu(A)$---then the distribution of energy is given by the measure $\Phi_\beta[\mu]$ where
\begin{align*}
    \dd\Phi_\beta[\mu](E) = C \ee^{-\beta E}\dd \mu(E),
\end{align*}
and $C$ is the normalization constant.

The map $\Phi_\beta$ that assigns to each probability measure $\mu$ on $\R$ the measure $\Phi_\beta[\mu]$ has two important properties:
First, it preserves the measure class of $\mu$, so that $\mu$ and $\Phi_\beta[\mu]$ are mutually absolutely continuous.
Second, it commutes with convolution:
\begin{align}
\label{eq:commute}
 \Phi_\beta[\mu_1 * \mu_2] = \Phi_\beta[\mu_1] * \Phi_\beta[\mu_2].
\end{align}
In terms of the physics, the first property means that the system can only be found at an energy level that corresponds to an existing state, and conversely, any existing state can be attained.
The second property involves products of independent systems. Suppose that $\mu_1$ and $\mu_2$ describe the distribution of states in two systems. Form a new system whose set of states is the product of the two sets of states, and whose energy in each state is the sum of the two corresponding energies; this corresponds to no interaction between the two subsystems. Then the convolution $\mu_1 * \mu_2$ describes the new system, and \eqref{eq:commute} follows from the assumption that the joint distribution of states is the product measure of the distributions in the two subsystems, or, differently put, the assumption that  there is no correlation between systems that do not interact.

The usual  explanation for the Boltzmann distribution is one of maximum entropy~\cite{jaynes1957information}. The physics behind this stems from the idea that a system of interest is in contact with the environment---a large ``heat bath''---and that the state of the combined system is distributed uniformly over a fixed-energy surface in phase space.  

In this paper, we offer an alternative explanation: Our main result is that the members of the family $(\Phi_\beta)_{\beta \in \R}$ are the unique maps that
are measure-class-preserving 
and commute with convolution. Note that $\Phi_\beta$ is not well-defined for every probability measure $\mu$ on~$\R$, since normalization is impossible when the tails are too thick. We limit ourselves to 
finitely supported probability measures, and furthermore to measures  with support in either the natural numbers, integers or rationals.

The map $\Phi_\beta$ is also the tilting map, whose usefulness in the theory of large deviations stems from the fact that it commutes with convolutions; see, e.g.,~\cite[Lemma 2.6.4]{durrett2019probability}. It is thus natural to ask which maps from measures to measures are like tilting, in the sense that they preserve the measure class 
and commute with convolution. In this context, our main result is a negative one, stating that none other exist. 

\bigskip

\subsection*{Definitions and Results.} Given a subset $S \subseteq \R$ closed with respect to addition, denote by $\cPr(S)$ the set of finitely supported probability measures on $S$. This is a semi-group under the operation of convolution. We say that $\Phi \colon \cPr(S) \to \cPr(S)$ is \emph{support-preserving} if $\mu$ and $\Phi[\mu]$ have the same support, or, equivalently, are mutually absolutely continuous, for all $\mu \in \cPr(S)$; note that since these are finitely supported measures, the two notions indeed coincide. We say that $\Phi$ is an \emph{endomorphism} if it commutes with convolution, i.e., if $\Phi[\mu_1 * \mu_2] = \Phi[\mu_1] * \Phi[\mu_2]$. 

The map $\Phi_\beta \colon \cPr(S) \to \cPr(S)$ given by
\begin{align*}
    \Phi_\beta[\mu](s) = \frac{\mu(s)\ee^{-\beta s}}{\sum_t \mu(t)\ee^{-\beta t}}
\end{align*}
is easily verified to be a support-preserving endomorphism. Our main result is that these are the unique ones, when $S$ is $\Z_{\geq 0}$, $\Z$ or $\Q$. 

\begin{maintheorem}
\label{thm:main}
Suppose that $S$ is either $\Z_{\geq 0}$, $\Z$, or $\Q$. Then, for every  support-preserving endomorphism $\Phi$ of $\cPr(S)$, there exists a constant $\beta \in \R$ such that $\Phi = \Phi_\beta$.
\end{maintheorem}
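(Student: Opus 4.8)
The plan is to reduce to the case $S=\N$ and then exploit the failure of unique factorization in the multiplicative semigroup $\mathcal{A}$ of polynomials with non-negative real coefficients.

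\textbf{Reductions.} Since $\Phi$ preserves supports and $\delta_1$ (resp.\ $\delta_{-1}$) is the unique element of $\cP(\Z)$ supported on $\{1\}$ (resp.\ $\{-1\}$), $\Phi$ fixes every $\delta_n$. Hence $\Phi$ maps $\cP(\N)\subseteq\cP(\Z)$ into itself and restricts there to a support-preserving endomorphism, and an arbitrary $\mu\in\cP(\Z)$ can be written $\mu=\delta_{-k}*\nu$ with $\nu\in\cP(\N)$, through which $\Phi$ is pushed; so $S=\Z$ follows from $S=\N$. The case $S=\Q$ follows by applying the $S=\Z$ case to $\Phi$ restricted to each $\cP(\tfrac1{m!}\Z)\cong\cP(\Z)$ and noting that the resulting parameters agree on the nested union $\cP(\Q)=\bigcup_m\cP(\tfrac1{m!}\Z)$.

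\textbf{Setup for $S=\N$.} Identify $\mu\in\cP(\N)$ with its probability generating function $p_\mu\in\mathcal{A}$, so that convolution becomes multiplication; as above $\Phi$ fixes the monomials $x^n$. Writing $\Phi[\tfrac12\delta_0+\tfrac12\delta_1]=\tfrac1{1+\lambda}\delta_0+\tfrac\lambda{1+\lambda}\delta_1$ and replacing $\Phi$ by $\Phi_{\log\lambda}\circ\Phi$ --- still a support-preserving endomorphism --- we may assume $\Phi[1+x]=1+x$, and it remains to prove $\Phi=\mathrm{id}$.

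\textbf{Main argument.} By support- and degree-preservation, $\Phi$ sends each Bernoulli $1+tx$ to some $1+\phi(t)x$; more generally it induces, on polynomials supported in $k\Z$, a support-preserving endomorphism with Bernoulli function $\phi_k$ (so $\phi_1=\phi$ and $\phi_1(1)=1$). The engine is a supply of multiplicative identities in $\mathcal{A}$. The cyclotomic ones --- the two factorizations of $\tfrac{x^{mn}-1}{x-1}$, with their dilates $x\mapsto tx$ and Veronese substitutions $x\mapsto x^k$ --- give, after applying $\Phi$ and matching coefficients, that $\Phi[1+tx^k+t^2x^{2k}]=1+\phi_k(t)x^k+\phi_k(t)^2x^{2k}$ and $\Phi[1+t^3x^{3k}]=1+\phi_k(t)^3x^{3k}$, the functional equations $\phi_{mk}(t^m)=\phi_k(t)^m$ and $\phi_2(t^2)=\phi_1(t)^2$, and --- bootstrapping along enough of these --- that $\phi_k(1)=1$ for all $k$ and that $\Phi$ fixes every uniform measure $\tfrac1n(\delta_0+\cdots+\delta_{n-1})$ and every cyclotomic polynomial. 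The crucial remaining task is to upgrade $\phi_1(1)=1$ to $\phi_1\equiv\mathrm{id}$. For this one uses a second family: for $Q_\theta=x^2-2\cos\theta\,x+1$ with $\theta\in(\tfrac\pi3,\tfrac\pi2)$ (so $Q_\theta\notin\mathcal{A}$) and $r\in[2\cos\theta,(2\cos\theta)^{-1}]$, the cubic $(x+r)Q_\theta$ lies in $\mathcal{A}$ and is $\mathcal{A}$-irreducible, and $(1+x/r_1)\cdot(x+r_2)Q_\theta=(1+x/r_2)\cdot(x+r_1)Q_\theta$. Applying $\Phi$ forces $\Phi[(x+r)Q_\theta]$ to be divisible by $1+\phi_1(1/r)x$ with a cofactor $R_\theta$ independent of $r$; at the two endpoints of the $r$-interval a coefficient of $(x+r)Q_\theta$ vanishes, hence the corresponding coefficient of $\Phi[(x+r)Q_\theta]$ must vanish, which expresses $\phi_1(2\cos\theta)$ and $\phi_1((2\cos\theta)^{-1})$ through $R_\theta$; letting $\theta$ vary so that $2\cos\theta$ sweeps $(0,1)$, and cross-referencing the cyclotomic data available at rational angles $\theta$, one propagates $\phi_1=\mathrm{id}$ over all of $(0,\infty)$ and identifies each $R_\theta=Q_\theta$. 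Once $\Phi$ fixes all Bernoullis and all irreducible quadratics, one finishes by induction on degree: from arbitrary $p\in\mathcal{A}$, peel off a real negative root whenever this keeps the quotient in $\mathcal{A}$ (reducing to the Bernoulli case), and otherwise use an identity expressing $p$ through lower-degree members of $\mathcal{A}$.

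\textbf{Main obstacle.} The genuinely hard step is the rigidification of $\phi_1$, equivalently: the only degree- and support-preserving permutation of the $\mathcal{A}$-irreducibles compatible with all multiplicative relations and fixing $1+x$ is the identity. This must be done without any continuity hypothesis on $\Phi$, and the delicate point is the polynomials all of whose roots are non-real (or whose real roots cannot be removed inside $\mathcal{A}$): there is no small factor to induct on, so one is forced to extract everything from the degenerations at the endpoints of the admissible parameter intervals and from matching the "tangential" identities above against the cyclotomic ones.
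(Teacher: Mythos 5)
Your reductions of $S=\Z$ and $S=\Q$ to $S=\N$ are exactly the paper's (fix $\delta_z$ by support-preservation, translate into $\cP(\N)$, and exhaust $\cP(\Q)$ by copies of $\cP(\Z)$), and several of your gadgets for $S=\N$ do appear in the paper: the family $(x+r)(x^2-ax+ab)$ with the degenerations at the endpoints $r=a$ and $r=b$ of the admissible interval, and the substitution $x\mapsto x^2$. But the proposal has a genuine gap precisely at what you yourself flag as the ``main obstacle'': you never produce a mechanism that upgrades $\phi_1(1)=1$ to $\phi_1=\mathrm{id}$. The endpoint-degeneration argument only yields $\Phi[x^2-ax+ab]=x^2-\phi_1(a)x+\phi_1(a)\phi_1(b)$, i.e.\ relations tying $\phi_1(a)$, $\phi_1(b)$ and the unknown cofactor together; ``letting $\theta$ vary'' and ``cross-referencing cyclotomic data at rational angles'' pins down $\phi_1$ only on a countable set, and with no continuity hypothesis that is not enough (a priori $\phi_1$ could be as wild as a non-measurable Cauchy solution). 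The paper closes exactly this hole with a different computation: the coefficient of $x$ in $(x^2-ax+ab)(x+2b)^2$ and in $(x^2-ax+ab)(x+3b)^3$ vanishes identically, so support-preservation forces $\phi_1(2b)=2\phi_1(b)$ and $\phi_1(3b)=3\phi_1(b)$; combined with strict monotonicity of $\phi_1$ (itself extracted from a sign constraint on the $x^2$-coefficient of $(x^2-ax+ab)(x+t)$ for $t\in(a,b)$) and density of $\{2^m3^n\}$, this gives $\phi_1(t)=\gamma t$. Without some substitute for these extra functional equations your argument cannot conclude.

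Two further steps are also not carried out. First, your quadratic family $Q_\theta$ with $\theta\in(\pi/3,\pi/2)$ only covers $x^2-ax+1$ with $a\in(0,1)$ (for $a\in[1,2)$ the interval $[a,1/a]$ of admissible linear cofactors degenerates), whereas the final factorization needs all $a\in(-2,2)$; the paper needs a separate argument here, handling $a=1$ via $x^3+1$, $a<0$ via the product $q^aq^{-a}$, and $a\in(1,2)$ via the substitution $x\mapsto x^2$ together with the dynamics of $a\mapsto a^2-2$, none of which your sketch supplies. Second, the endgame ``peel off a negative real root when possible, otherwise use an identity expressing $p$ through lower-degree members'' is not an argument: a generic $p\in\cP(\N)$ has no factorization inside the non-negative-coefficient semigroup into Bernoullis and quadratics (its quadratic factors typically have a negative middle coefficient). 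The paper resolves this by extending $\Phi$ multiplicatively to all of $\cM(\N)$ (polynomials with no positive roots) via Poincar\'e/P\'olya, proving the extension is polynomial-valued and degree-preserving by playing two cofactors $(x+a)^n$, $(x+b)^n$ against each other, and only then invoking unique factorization into linear and root-free quadratic factors. Your cofactor $R_\theta$ is an ad hoc instance of this extension, but the general construction (and the proof that its values are polynomials rather than rational functions) is missing from the proposal.
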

For $S=\R$, the corresponding claim is not true, as we discuss below. However, it does hold if we also require $\Phi$ to be weakly continuous, as a corollary of the statement for $S=\Q$.

Note that even though $\Phi$ is not assumed to be a bijection, this property emerges as a consequence of the assumptions of Theorem~\ref{thm:main}. Another emergent property is that, up to normalization, $\Phi$ is affine, i.e., there is a function $f \colon \R \to \R$ such that $\Phi[\mu](s)$ is proportional to $\mu(s) f(s)$. We do not assume affinity; under such an additional assumption, it is easy to show that $f$ is an exponential.

The bulk of the effort in the proof of Theorem~\ref{thm:main} is the case $S=\Z_{\geq 0}$. By considering probability-generating functions, this question can be reduced to a question about polynomials. Denote by $\cP$ the polynomials in one variable whose coefficients are non-negative and sum to one:
\begin{align*}
    \cP = \left\{p(x) = \sum_{k=0}^n p_k x^k\,\middle\vert\, p_k \geq 0,\  p(1)=1\right\}.
\end{align*}
These are precisely the probability-generating functions of finitely supported probability measures on $\Z_{\geq 0}$. We note that representing probability measures by probability-generating functions is equivalent to looking at their Fourier transforms.  The representation of a probability measure by its probability-generating function carries a trade-off: while simplifying convolution of measures to mere multiplication, this representation makes it harder to handle the support-preserving property.   

We say that $\Phi \colon \cP \to \cP$ is support-preserving if it preserves the set of  positive coefficients. That is, if $p' = \Phi[p]$, then $p'_k > 0$ if and only if  $p_k>0$.  We say that $\Phi$ is multiplicative if $\Phi[p \cdot p'] = \Phi[p] \cdot \Phi[p']$. These two properties, translated back to probability measures, are equivalent to $\Phi$ being a support-preserving endomorphism.
\begin{maintheorem}
\label{thm:main-poly}
For every  support-preserving multiplicative $\Phi \colon \cP \to \cP$  there exists a constant  $\gamma > 0$ such that $\Phi[p](x) = p(\gamma x)/p(\gamma)$.
\end{maintheorem}
By identifying finitely supported probability measures on $\Z_{\geq 0}$  with their probability-generating functions, this result is equivalent to the case $S=\Z_{\geq 0}$ in Theorem~\ref{thm:main}.

A corollary of Theorem~\ref{thm:main-poly} is that if $p$ is a polynomial with at least two terms---e.g., $p(x)=(x+1)/2$---then $\Phi$ is completely determined by $\Phi[p]$: if $\Phi'[p]=\Phi[p]$ for two support-preserving multiplicative maps $\Phi,\Phi'$, then $\Phi=\Phi'$. 
\begin{corollary}\label{cor_unique}
    Suppose that $\Phi,\Phi' \colon \cP \to \cP$ are support-preserving and multiplicative and $p\in \cP$ is a polynomial with at least two terms. Then $\Phi[p]=\Phi'[p]$ implies that $\Phi=\Phi'$.
\end{corollary}
    Indeed, by Theorem~\ref{thm:main-poly} there are $\gamma,\gamma'>0$ such that $\Phi[p](x)=p(\gamma x)/p(\gamma)$ and $\Phi'[p](x)=p(\gamma' x)/p(\gamma')$. Comparing coefficients at the lowest-degree monomials, we conclude that  $\Phi[p]=\Phi'[p]$ implies $\gamma=\gamma'$ and hence $\Phi=\Phi'$.

This corollary may be a priori surprising since it is not clear how the multiplicative and support-preserving properties of $\Phi$ imply that  fixing $\Phi[(x+1)/2]$  constrains $\Phi[q]$ for any $q$ that is not a power of $(x+1)/2$. Indeed, one could have imagined that any choice of $\Phi[(x+1)/2]$ and, say, $\Phi[(x^2+x^{17})/2]$ that is support-preserving could be extended to a support-preserving and multiplicative $\Phi$. Nevertheless, this is generally impossible, as a consequence of Theorem~\ref{thm:main-poly}. The underlying reason is that $\cP$ is not a unique factorization domain. 

Say that $p \in \cP$ is irreducible if it cannot be written as a product $p=q_1\cdot q_2$, for $q_1, q_2 \in \cP$ such that $q_1,q_2 \neq 1$. It is easy to see that every $p \in \cP$ can be written as a product of irreducibles. But importantly,  this decomposition is not always unique. Hence, if $p = q_1 \cdot q_2 = r_1 \cdot r_2$, then $\Phi[q_1] \cdot \Phi[q_2] = \Phi[r_1] \cdot \Phi[r_2]$, providing additional constraints on $\Phi$. As it turns out, there are sufficiently many such constraints for $\Phi[(x+1)/2]$ to fix $\Phi$.

\subsection*{Proof techniques} 

The main tool in the proof of Theorem~\ref{thm:main-poly} is the extension of a support-preserving multiplicative $\Phi$ to the larger domain 
\begin{align*}
    \cM = \left\{p(x) = \sum_{k=0}^d p_k x^k\,\middle\vert\, p(x) > 0 \text{ for all } x>0,\ p(1)=1\right\}.
\end{align*}
These are the polynomials that are positive for positive $x$ and whose coefficients sum to $1$. Equivalently, these are the generating functions of signed, finitely supported, unit mass measures on $\Z_{\geq 0}$ that have a positive moment generating function.\footnote{In statistical mechanics terms, these measures describe systems that have \emph{anti-states}---a negative number of states at some energies---but still have a positive partition function. It is unclear if these have a meaningful physical interpretation.}

The fact that $\Phi$ can be extended to a multiplicative map on $\cM$ follows from the following classical result due to Poincar{\'e}~\cite{poincare1883equations}.
\begin{lemma}[Poincar{\'e}]
\label{lem:rs}
   For every $p \in \cM$ there is an $r \in \cP$ such that $p \cdot
   r \in \cP$.
\end{lemma}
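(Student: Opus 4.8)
The plan is to build $r$ as a power of $(x+1)/2$, reducing the statement to a one‑variable positivity fact about such products. First I would factor $p$ over $\R$ as $\lambda$ times monic irreducible factors. The leading coefficient $\lambda$ is positive because $p(x)\to+\infty$ as $x\to+\infty$; $p$ has no zero in $(0,\infty)$, so its real linear factors have the form $x-\rho$ with $\rho\le 0$ and already have nonnegative coefficients; and each irreducible quadratic factor $x^2+bx+c$ has negative discriminant, hence $c>0$, and is likewise harmless when $b\ge 0$. Since products of polynomials with nonnegative coefficients have nonnegative coefficients, it suffices to treat each ``bad'' quadratic factor $q(x)=x^2-2ax+c$ (with $a>0$, $a^2<c$) on its own: if for each such $q$ there is an $N_q$ with $(1+x)^{N_q}q(x)$ having nonnegative coefficients, then, summing the $N_q$ with multiplicities to get $N$, the product $(1+x)^N p(x)$ has nonnegative coefficients, and $r(x)=\left(\frac{1+x}{2}\right)^{N}$ — which lies in $\cP(\N)$, its coefficients being nonnegative with sum $r(1)=1$ — satisfies $p(x)r(x)=(1+x)^Np(x)/2^N$ with nonnegative coefficients and $(p\cdot r)(1)=p(1)r(1)=1$, so $p\cdot r\in\cP(\N)$, as required.

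The actual content is then the claim: for $q(x)=x^2-2ax+c$ with $a>0$ and $a^2<c$, the coefficients of $(1+x)^N q(x)$ are nonnegative for all large $N$. The coefficient of $x^k$ is
\begin{align*}
c_k(N)=\binom{N}{k-2}-2a\binom{N}{k-1}+c\binom{N}{k},
\end{align*}
with $\binom{N}{j}=0$ outside $0\le j\le N$. The four edge indices are immediate ($c_0=c$, $c_1=cN-2a$, $c_{N+1}=N-2a$, $c_{N+2}=1$, all $\ge 0$ once $N>2a$). For $2\le k\le N$ I would divide by $\binom{N}{k}>0$, set $D=N-k+1\ge 1$ and $u=k/D\ge 0$, and simplify: $c_k(N)$ is nonnegative exactly when $(D+1)\,q(u)\ge u(u+1)$. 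Using $q(u)=(u-a)^2+(c-a^2)\ge c-a^2>0$, this splits into two regimes: if $u\ge 4a+1$ then $(D+1)q(u)-u(u+1)\ge 2q(u)-u(u+1)=u^2-(4a+1)u+2c\ge 0$; and if $u<4a+1$ then $k<(4a+1)D$, whence $N+1=k+D<(4a+2)D$, so $(D+1)q(u)>\frac{N+1}{4a+2}(c-a^2)$ while $u(u+1)<(4a+1)(4a+2)$ stays bounded, and the inequality holds once $N$ exceeds a constant depending only on $a,c$. Taking $N_q$ equal to that constant proves the claim.

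The only step I expect to be delicate is this last uniform estimate near the edge $k\approx N$, where $D$ is small and both $(D+1)q(u)$ and $u(u+1)$ grow without bound: the point is that $q(u)\sim u^2$ dominates, which the case split on the size of $u$ makes transparent — a bounded $u$ forces $D$, and hence $N$, to be large, while a large $u$ is absorbed by the $u^2$ term with room to spare. Everything else is bookkeeping. Alternatively, since the lemma is (the generating‑function form of) Poincar\'e's theorem that a polynomial positive on $[0,\infty)$ has $(1+x)^Np$ with nonnegative coefficients for $N$ large, one may simply cite~\cite{poincare1883equations}.
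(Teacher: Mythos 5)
Your proposal is correct, and it does genuinely more than the paper does for this statement: the paper offers no proof of Lemma~\ref{lem:rs}, citing Poincar\'e~\cite{poincare1883equations} (and P\'olya~\cite{polya1928uber} for the stronger Lemma~\ref{lem:polya}), whereas you give a complete elementary argument. Your route---factor $p$ over $\R$, observe that the positive leading constant, the linear factors $x+|\rho|$, and the quadratics with nonnegative middle coefficient already lie in the nonnegative cone, and absorb each remaining factor $q(x)=x^2-2ax+c$ (with $a>0$, $a^2<c$) into a high power of $1+x$---is the classical proof of Poincar\'e's theorem, and your key reduction checks out: dividing $c_k(N)$ by $\binom{N}{k}$ and clearing denominators gives exactly $D\left[(D+1)q(u)-u(u+1)\right]$ with $D=N-k+1$, $u=k/D$, so nonnegativity for $2\le k\le N$ is indeed equivalent to $(D+1)q(u)\ge u(u+1)$; your two regimes (for $u\ge 4a+1$, $2q(u)-u(u+1)=u^2-(4a+1)u+2c\ge 2c>0$; for $u<4a+1$, $D>\frac{N+1}{4a+2}$ so $D(c-a^2)$ eventually dominates the bounded quantity $u(u+1)$) do close the edge-of-range difficulty you flag, and the four boundary coefficients are as you state. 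Two remarks on what each approach buys: since multiplying by $1+x$ preserves nonnegativity of coefficients, your threshold argument in fact proves the statement for \emph{all} sufficiently large $N$, i.e.\ it establishes not only Lemma~\ref{lem:rs} but also the P\'olya-type Lemma~\ref{lem:polya} that the paper likewise only cites (in the univariate case needed here); conversely, the paper's citation is shorter and leans on the classical literature, while your version is self-contained and quantitative, with an explicit exponent $N_q$ depending only on $a$ and $c-a^2$ for each bad quadratic factor.
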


Using this, we can extend the domain of any support-preserving multiplicative $\Phi$ to $\cM$ by choosing for  $p \in \cM$ an $r \in \cP$ such that $p\cdot r \in \cP$ and setting
\begin{align*}
    \Phi[p] = \frac{\Phi[p \cdot r]}{\Phi[r]}.
\end{align*}
As we show, this is well-defined, i.e., independent of the choice of $r$. However, $\Phi[p]$ could now be a rational function. The first part of our proof is dedicated to showing that the image of this extension of $\Phi$ is, in fact, in $\cM$. 

The advantage of $\cM$ is that it is a unique factorization domain: This set consists of the polynomials $p$ with $p(1)=1$ that have no positive roots, and hence, by the fundamental theorem of algebra, each $p \in \cM$ can be written as a product of linear terms with non-positive roots and quadratic terms without real roots, all in $\cM$, and this decomposition is unique. Since $\Phi$ commutes with multiplication, in order to show that $\Phi[p]=\Phi_\beta[p]$ for all $p \in \cM$, it suffices to show that this holds for linear and quadratic $p$. This is what the remainder of our effort is dedicated to.


\subsection*{Polynomials with rational coefficients}

Recall that, in the physics interpretation, a measure $\mu$ represents the fraction of states at each energy level.
In quantum mechanical settings, there are often only finitely many states at each energy level. In this case, the measure $\mu$ will have rational probabilities, and  the corresponding probability-generating function will be a polynomial with rational coefficients. Indeed, if there are $k_i$ states with energy $E_i$ and the total number of states is $k=\sum_i k_i$, then $\mu$ has an atom of weight $k_i/k$ at $E_i$.
This motivates the pursuit of the same questions, but in a rational setting. Let
\begin{align*}
    \cP_\Q = \left\{p(x) = \sum_{k=0}^d p_k x^k\,\middle\vert\, p_k \in \Q_{\geq 0}, \ \  p(1)=1\right\}
\end{align*}
be the set of polynomials with non-negative rational coefficients that sum to one. The next result shows that Theorem~\ref{thm:main-poly} still holds in this setting.

\begin{maintheorem}
\label{thm:main-poly-rational}
For every  support-preserving multiplicative $\Phi \colon \cP_\Q \to \cP$  there exists a constant  $\gamma >0$ such that $\Phi[p](x) = p(\gamma x)/p(\gamma)$.
\end{maintheorem}
If the image of $\Phi$ is further restricted to belong to $\cP_\Q$, then the resulting parameter $\gamma$ will also belong to $\Q$.

The proof of Theorem~\ref{thm:main-poly-rational} requires additional arguments beyond those of Theorem~\ref{thm:main-poly}.\footnote{The same proof technique shows that a version of Theorem~\ref{thm:main-poly-rational} holds for any subfield of $\R$ instead of $\Q$. We focus on the case of $\Q$ since other subfields do not result in a clear physical interpretation.} The main difficulty is that the rationality of the coefficients leads to more (and more complicated) irreducible polynomials. We circumvent this issue by using the arguments from the proof of Theorem~\ref{thm:main-poly} to show a similar statement for a dense sub-semi-group that does have simple irreducibles, and then proving the following automatic continuity type result.

We say that a set $P$ of polynomials is rich if $P$ contains $q(x)=x$ and, with each polynomial of the form $p(x)=x^m\cdot r(x)$ contained in $P$, the polynomial $r(x)$ is also contained in $P$. We endow $\cM$ with the topology of simultaneous convergence of the coefficients and the degree. 

\begin{proposition}
\label{prop:auto-cont}
Let $M\subset M'$ be rich dense sub-semi-groups of $\cM$.
Suppose $\Phi \colon M' \to \cM$ is multiplicative and
degree-preserving, $\Phi[p]\in \cP$ for $p\in M'\cap \cP$, 
and the restriction of $\Phi$  to  $M$ is the identity map. Then $\Phi$ is the identity map. 
\end{proposition}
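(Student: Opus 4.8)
\emph{Proof strategy.}

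The plan is to reduce the statement to a continuity property of $\Phi$, and then to extract that continuity from multiplicativity together with the constraint that $\Phi$ maps $\cP(\N)$ into $\cP(\N)$. First, some harmless reductions. Since $M'$ is rich it contains $x$, and writing $x=x^1\cdot 1$ shows (by richness) that $1\in M'$, and likewise $1\in M$. From multiplicativity $\Phi[1]=\Phi[1]^2$, and the only idempotent of $\cM(\N)$ is the constant polynomial $1$ (a polynomial equal to its square is $0$ or $1$, and $0\notin\cM(\N)$), so $\Phi[1]=1$; also $\Phi[x]=x$ since $x\in M$. If $p(x)=x^m r(x)\in M'$ with $r(0)>0$, then $r\in M'$ by richness and $\Phi[p]=x^m\Phi[r]$, so it suffices to treat $p$ with $p(0)>0$. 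Finally, $M$ is dense in $M'$ (both being dense in $\cM(\N)$), so it is enough to show that $\Phi\colon M'\to\cM(\N)$ is continuous for the topology of coefficient-wise convergence: then $\Phi$ and the identity are continuous maps agreeing on a dense set, hence equal on $M'$.

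To get at continuity we record a compactness input. By a P\'olya-type strengthening of Lemma~\ref{lem:rs}, if $h\in\cM(\N)$ has $h(0)>0$ then $h\cdot s^{N}\in\cP(\N)$ for every $s\in\cP(\N)$ with strictly positive coefficients, once $N$ is large; moreover $N$ may be taken uniform over $h$ whose complex roots stay a fixed distance away from $[0,\infty)$. Fix an $s_0\in M$ with strictly positive coefficients (available by density, positivity of coefficients being an open condition). Then for $p\in M'$ with $p(0)>0$ and $N$ large,
\begin{align*}
  \Phi[p]\cdot s_0^{N}=\Phi\bigl[p\cdot s_0^{N}\bigr]\in\cP(\N),
\end{align*}
because $p\cdot s_0^N\in M'\cap\cP(\N)$ and $\Phi[s_0]=s_0$. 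Hence, for $p$ of a fixed degree $d$, $\Phi[p]$ lies in the set of polynomials $p'$ with $p'\cdot s_0^{N}\in\cP(\N)$, which is compact, being the image of a closed subset of a simplex under the linear (hence continuous) map ``divide by $s_0^N$''. Consequently, along any sequence $p_n\to p$ in $M'$ whose members have a common degree, the images $\Phi[p_n]$ subconverge, and the required continuity reduces to the closed-graph statement: if $p_n\to p$ in $M'$ (with bounded degrees) and $\Phi[p_n]\to p'$, then $p'=\Phi[p]$.

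The closed-graph statement is the heart of the matter, and the main obstacle. Multiplicativity and degree preservation alone are far too weak: since $\cM(\N)$ is the free commutative monoid on its linear and irreducible-quadratic generators, an abstract multiplicative degree-preserving self-map may send these generators anywhere and be badly discontinuous. Continuity must therefore be squeezed out of the two further facts at our disposal --- that $\Phi$ is the identity on the dense sub-semi-group $M$, and that $\Phi$ carries genuinely non-negative products (elements of $\cP(\N)\cap M'$, in particular products of $\cM(\N)$-irreducibles some of which have a negative middle coefficient) into $\cP(\N)$. Concretely, given $p_n\to p$ and $\Phi[p_n]\to p'$, I would factor $p$ and the approximating $M$-elements into their linear and quadratic irreducibles over $\cM(\N)$, multiply by carefully chosen elements of $M$ so as to land inside $\cP(\N)$, and then re-run the perturbative linear/quadratic estimates from the proof of Theorem~\ref{thm:main-poly} --- now localized near $p$ rather than applied globally --- to pin $p'$ down to $p$ coefficient by coefficient. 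A secondary technical point, which the richness hypothesis is designed to smooth over, is the degree and $x$-multiplicity bookkeeping needed to make the approximating sequences have bounded (indeed eventually constant) degree, so that the compactness step applies uniformly.
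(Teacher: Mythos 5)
There is a genuine gap: the step you yourself call ``the heart of the matter'' --- identifying the limit in the closed-graph statement --- is not proved but only announced (``I would factor \dots and re-run the perturbative linear/quadratic estimates from the proof of Theorem~\ref{thm:main-poly}''). Moreover, that plan cannot be carried out under the hypotheses of Proposition~\ref{prop:auto-cont}: the perturbative arguments of \S\ref{sec_binomials} (Proposition~\ref{prop:binomials}, Claims~\ref{clm:degree-1-unequal}, \ref{clm:monotone}, etc.) all hinge on $\Phi$ being \emph{support-preserving}, i.e.\ on knowing exactly which coefficients of $\Phi[p]$ vanish, whereas here $\Phi$ is only assumed to send $M'\cap\cP(\N)$ into $\cP(\N)$ and to be the identity on $M$; as you note, multiplicativity plus degree preservation alone leave the images of the irreducible factors essentially unconstrained, so nothing in your outline actually pins down $p'$. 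A secondary issue is the ``P\'olya-type strengthening'' you invoke ($h\cdot s^N\in\cP(\N)$ for \emph{every} $s$ with strictly positive coefficients, with $N$ uniform over $h$ whose roots stay away from $[0,\infty)$): this is a nontrivial assertion, not in the paper, and you give no proof; it only feeds the compactness step, which in any case does not resolve the identification problem.

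For comparison, the paper's proof bypasses continuity and compactness entirely. The key tool is a separation statement proved via Farkas' lemma (Lemmas~\ref{lm:cone} and~\ref{lm:cone_interior}): if $p\neq p'$, $\deg(p')\leq\deg(p)$ and $p_0\neq 0$, then the \emph{interior} of $S_p=\{q\in\cM(\N): q\cdot p\in\cP(\N)\}$ is not contained in $S_{p'}$, so by density one can pick a witness $q\in M$ with $p\cdot q\in\cP(\N)$ while $p'\cdot q$ has a negative coefficient. Taking $p\in M'\cap\cP(\N)$ with $p_0\neq 0$ (richness strips powers of $x$, as in your reduction) and $p'=\Phi[p]$, multiplicativity and $\Phi[q]=q$ give $p'\cdot q=\Phi[p\cdot q]\in\cP(\N)$, a contradiction; hence $\Phi[p]=p$ on $M'\cap\cP(\N)$, and the case $p\in M'\setminus\cP(\N)$ follows by multiplying by $(x+a)^n\in M$ via Lemma~\ref{lem:polya-r}. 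If you want to salvage your route, you would need to supply precisely such a quantitative ``witness'' argument; the duality step is where the actual content lies.
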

Here, ``degree-preserving'' means that $q$ and $\Phi[q]$ are polynomials of the same degree.

\subsection*{Open questions}

\subsubsection*{Probability measures over the reals}

Endow the set $\cPr(\R)$ of finitely supported probability measures on $\R$ with the topology defined by $\mu_n \to \mu$ if $\int f \,\dd\mu_n \to \int f\,\dd\mu$ for all continuous $f \colon \R \to \R$. Then $\cPr(\Q)$ is a dense subset of $\cPr(\R)$, and so any continuous support-preserving endomorphism of $\cPr(\R)$ is of the form $\Phi_\beta$.

There are many other support-preserving endomorphisms of $\cPr(\R)$. Indeed, if $\pi \colon \R \to \R$ is any non-continuous solution to the Cauchy equation $\pi(x+y)=\pi(x)+\pi(y)$, then 
\begin{align*}
    \Phi[\mu](x) = \frac{\mu(x) \ee^{-\pi(x)}}{\sum_y \mu(y)\ee^{-\pi(y)}}
\end{align*}
is a support-preserving endomorphism. Of course, this is non-constructive, since the existence of non-continuous solutions of the Cauchy equation requires an application of some axiom of choice (e.g., any axiom that is strong enough to guarantee a Hamel  basis of $\R$ over $\Q$). A natural conjecture is that any support-preserving endomorphism on $\cPr(\R)$ that is not continuous (equivalently, not of the form $\Phi_\beta$) is not measurable.

The same conjecture can be made about support-preserving endomorphisms of the set of compactly supported (rather than finitely supported) probability measures on the reals. In this case, we suspect that the conjecture follows from automatic continuity results for Polish groups, such as the Banach-Pettis Theorem (see Theorem 2.2 in \cite{rosendal2009automatic}).

\subsubsection*{Support-preserving endomorphisms of $\cPr(\Z^d)$ and beyond} Our techniques do not extend beyond $d=1$ in a straightforward way, as multivariate polynomials do not generally decompose into a product of simple factors, such as the quadratic polynomials in the one-dimensional case. We thus offer the following question: Is there, for every support-preserving endomorphism $\Phi \colon \cPr(\Z^d) \to \cPr(\Z^d)$, a vector $\beta = (\beta_1,\ldots,\beta_d)$ such that 
\begin{align*}
    \Phi[\mu](x) = \frac{\mu(x) \ee^{-\beta \cdot x}}{\sum_y \mu(y)\ee^{-\beta \cdot y}}?
\end{align*}

More generally, given a semi-group $G$, does there always exist a homomorphism $\pi \colon G \to \R$ such that every support-preserving $\Phi \colon \cPr(G) \to \cPr(G)$ is of the form
\begin{align*}
    \Phi[\mu](x) = \frac{\mu(x) \ee^{-\pi(x)}}{\sum_y \mu(y)\ee^{-\pi(y)}}?
\end{align*}

\subsubsection*{Weakening the support-preserving requirement}

For $S\subset \R$, say that $\Psi \colon \cPr(S) \to \cPr(S)$ is \emph{weakly support-preserving} if $\Psi[\mu]$ is absolutely continuous with respect to $\mu$, i.e., the support of $\Psi[\mu]$ is a subset of the support of $\mu$. Clearly, every support-preserving $\Phi$ is also weakly support-preserving, and so the class of support-preserving endomorphisms of, say, $\cPr(\Z_{\geq 0})$ is contained in the weakly support-preserving ones. We conjecture that the set of all 
 weakly support-preserving endomorphisms is exhausted by $\Phi_
\beta$ and the two limiting cases $\Phi_{+\infty} = \lim_{\beta\to+\infty}\Phi_\beta$ and $\Phi_{-\infty}=\lim_{\beta\to-\infty}\Phi_\beta$, which correspond to  putting a point mass on the minimal or maximal point of the support, respectively.

\subsection*{Related literature}
This paper is related  to other work on  polynomials with non-negative coefficients. This literature consists of two lines of research. 

The line closest to our analysis originated from the classical works of Poincar\'e~\cite{poincare1883equations} and P\'olya~\cite{polya1928uber},  exploring the relation of non-negativity of a polynomial $p$ and the possibility of finding a factor $r$ such that $r\cdot p$ has non-negative coefficients under various assumptions on $r$; see a recent contribution by Michelen and Sahasrabudhe~\cite{michelen2019characterization} for a survey. 

A related strain of research explores the connection between the coefficients and the distribution of zeros. Since polynomials with non-negative coefficients are moment-generating functions of finitely-supported distributions, this direction is tightly related to non-classical limit theorems of probability theory; see the series of papers by Michelen and Sahasrabudhe  for recent progress~\cites{michelen2019central,michelen2019central2,michelen2021anti}. 

The algebra of the semi-group of probability measures on the reals under convolutions is well studied, including its homomorphisms to $\R$, $\Z$ and $\C$; see ``Algebraic Probability Theory,'' a book by  Ruzsa and Sz{\'e}kely~\cite{ruzsa1988algebraic}, as well as more recent work~\cites{mu2024monotone, fritz2019monotone}. Homomorphisms into general groups were considered by Mattner~\cite{mattner2004cumulants}.

\subsection*{Acknowledgements}
We thank Tim Austin, Alexander Guterman, Ramon van Handel, Tom Hutchcroft, Daniel Litt, Gil Refael, Barry Simon, and Stanislav Smirnov for illuminating conversations and helpful suggestions.

\section{Preliminaries}
\label{sec:prelim}
Recall that $\cP$ is the set of polynomials $p(x) = \sum_{k=0}^d p_k x^k$ such that $p_k \geq 0$ and $p(1)=\sum_k p_k = 1$. We denote by $\deg(p)$ the degree of $p$. The set $\cP$ is contained in $\cM$, the set of polynomials $p$ such that $p(1)=1$ and $p(x)>0$ for all $x>0$. Note that the latter condition can be equivalently changed to $p(x) \neq 0$ for all $x>0$, so that $\cM$ consists of the polynomials $p$ with no positive roots such that $p(1)=1$.

For notational convenience,  we will sometimes omit normalization constants, often writing these polynomials in monic form. For example, $p(x)=(x+1)/2 \in \cP$ will be written as $p(x)=x+1$. Similarly, an expression of the form $\Phi[(x+1)/2] = (x+2)/3$ will be written more succinctly as $\Phi[x+1]=x+2$. Since every polynomial with non-positive roots can be normalized to a unique $p \in \cM$, and since normalization preserves the support and commutes with multiplication, this will introduce no ambiguity.

For $\gamma>0$, let $\Psi_\gamma \colon \cM \to \cM$ be given by
\begin{align}
    \label{eq:psi}
    \Psi_\gamma \colon \cM &\to \cM\nonumber \\
    p(x) &\mapsto p(x / \gamma).
\end{align}
Note that we omit normalization constants, as explained above; the normalized form is $\Psi_\gamma[p] = p(x/\gamma)/p(1/\gamma)$. In terms of probability measures, $\Psi_\gamma$ corresponds to the map $\Phi_\beta$ from the introduction, for $\beta = \log \gamma$. In particular, it is easy to verify that $\Psi_\gamma$ is support-preserving and multiplicative, and---importantly---that it maps $\cP$ to $\cP$.

The following lemma is a strengthening of Lemma~\ref{lem:rs}. 
\begin{lemma}[B\'alinth]
\label{lem:polya}
For every $q \in \cM$ it holds for all $n$ large enough that $q(x)\cdot(x+1)^n \in \cP$.
\end{lemma}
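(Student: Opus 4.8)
The plan is to deduce the lemma from the Positivstellensatz of P\'olya~\cite{polya1928uber} by passing to homogeneous polynomials in two variables. First I would make a harmless reduction to the case $q(0)>0$: writing $q(x)=x^m r(x)$ with $r(0)\neq 0$, the hypothesis $q\in\cM(\N)$ forces $r\in\cM(\N)$ and, since $r>0$ on $(0,\infty)$ and $r$ is continuous, $r(0)>0$; as $q(x)(x+1)^n=x^m\cdot r(x)(x+1)^n$ differs from $r(x)(x+1)^n$ only by a shift of exponents, it suffices to prove the lemma for $r$, i.e., I may assume $q(x)>0$ for all $x\ge 0$, and then the leading coefficient of $q$ is positive as well since $q(x)\to+\infty$.

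Next I would homogenize: for $q(x)=\sum_{k=0}^d a_k x^k$ set $Q(X,Y)=Y^d q(X/Y)=\sum_{k=0}^d a_k X^k Y^{d-k}$, a form of degree $d$, and observe that it is strictly positive on the nonnegative orthant minus the origin (for $Y>0$ we get $Y^d q(X/Y)>0$ because $X/Y\ge 0$; and $Q(X,0)=a_d X^d>0$ for $X>0$). P\'olya's theorem then provides an $N$ for which every coefficient of $(X+Y)^N Q(X,Y)$ is nonnegative (indeed positive), and setting $Y=1$ gives $(x+1)^N q(x)=(x+1)^N Q(x,1)\in\cP(\N)$. To upgrade this from a single exponent to \emph{all} large $n$, I would note that $(x+1)^n q(x)=(x+1)^{\,n-N}\cdot\bigl[(x+1)^N q(x)\bigr]$ for $n\ge N$ is a product of two polynomials with nonnegative coefficients, hence lies in $\cP(\N)$.

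Granting the Positivstellensatz this reduction is entirely routine, so the genuine content of the lemma is absorbed into that citation; if one instead wanted a self-contained argument, one would estimate the coefficient of $x^j$ in $q(x)(x+1)^n$, which is $\sum_{k=0}^d a_k\binom{n}{j-k}$, directly. Near the ``edges'' --- $j$ bounded, or $d+n-j$ bounded --- the constant term (using $a_0=q(0)>0$) or the leading term (using $a_d>0$) dominates the remaining, possibly negative, terms once $n$ is large. In the ``bulk'' $d\le j\le n$, dividing by $\binom{n}{j}$ rewrites the coefficient as $\sum_{k=0}^d a_k\prod_{m=1}^k\frac{j-m+1}{n-j+m}$, which for $j=tn$ converges to $q\!\bigl(\tfrac{t}{1-t}\bigr)\ge\min_{u\ge 0}q(u)>0$ uniformly over $t$ in any compact subset of $[0,1)$, while for $t$ near $1$ the growing product forces the $k=d$ term to dominate. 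The hard part of this self-contained route --- precisely what P\'olya's theorem spares us --- is making the domination of the intermediate terms uniform in $j$; the homogenization argument packages exactly this difficulty into the cited result.
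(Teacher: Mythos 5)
Your proof is correct and takes essentially the same route as the paper, which states Lemma~\ref{lem:polya} without proof and simply cites P\'olya's Positivstellensatz~\cite{polya1928uber}; your homogenization argument is the standard deduction from that result. The details you supply---reducing to $q(0)>0$ so the homogenized form is strictly positive on the simplex, noting the leading coefficient is positive, and passing from a single exponent $N$ to all $n\ge N$ by multiplying by $(x+1)^{n-N}$---are exactly what is needed to make that citation rigorous.
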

Note that we here again drop the normalization constant and write $(x+1)^n$ rather than $2^{-n}(x+1)^n$. This result is a consequence of P\'olya's Positivstellensatz~\cite{polya1928uber}, a more general statement about multivariate polynomials; see also Theorem~56 in~\cite{hardy1952inequalities}. In the univariate setting, a version of this result was obtained by B\'alinth; see a footnote on the first page of~\cite{polya1928uber}.

We will need a slight strengthening of this lemma.
\begin{lemma}
\label{lem:polya-r}
For every $q \in \cM$ and all $\gamma > 0$ it holds for all $n$ large enough that $q(x)(x+\gamma)^n \in \cP$.
\end{lemma}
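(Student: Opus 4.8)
The plan is to deduce this from Lemma~\ref{lem:polya} by a multiplicative change of variables, using the rescaling maps $\Psi_\gamma$ from \eqref{eq:psi}. Fix $q \in \cM(\N)$ and $\gamma > 0$. The first step is to introduce the rescaled polynomial $q_\gamma(x) := q(\gamma x)$. Since $\gamma x > 0$ whenever $x > 0$, we have $q_\gamma(x) > 0$ for all $x > 0$, so $q_\gamma \in \cM(\N)$ (after the tacit normalization by $q_\gamma(1) = q(\gamma) > 0$, a positive constant).

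Next I would apply Lemma~\ref{lem:polya} to $q_\gamma$: for all $n$ large enough, $q_\gamma(x)\,(x+1)^n$ has non-negative coefficients, i.e., lies in $\cP(\N)$. The final step is to undo the rescaling by applying the map $\Psi_\gamma \colon p(x) \mapsto p(x/\gamma)$ of \eqref{eq:psi}. Since $\gamma > 0$, this operation multiplies the coefficient of $x^k$ by $\gamma^{-k} > 0$, hence preserves non-negativity of coefficients; equivalently, $\Psi_\gamma$ maps $\cP(\N)$ into $\cP(\N)$, as already recorded after \eqref{eq:psi}. Applying $\Psi_\gamma$ to $q_\gamma(x)(x+1)^n$ yields
\begin{align*}
q_\gamma(x/\gamma)\,(x/\gamma+1)^n = q(x)\cdot\gamma^{-n}(x+\gamma)^n,
\end{align*}
which, up to the positive scalar $\gamma^{-n}$, is precisely $q(x)(x+\gamma)^n$. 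Hence $q(x)(x+\gamma)^n \in \cP(\N)$ for all $n$ large enough, with the threshold on $n$ depending on $q$ and $\gamma$ through $q_\gamma$.

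I do not expect a genuine obstacle in this argument; it is essentially a substitution wrapped around Lemma~\ref{lem:polya}. The only points needing a moment's care are bookkeeping ones: confirming that dilation by a positive $\gamma$ preserves the sign pattern of the coefficients (so that both $q \mapsto q_\gamma$ stays inside $\cM(\N)$ and $\Psi_\gamma$ stays inside $\cP(\N)$), and tracking the multiplicative normalization constants $q(\gamma)$ and $\gamma^{-n}$, which are positive and therefore immaterial under the convention, adopted in Section~\ref{sec:prelim}, that polynomials with non-positive roots are identified with their normalizations.
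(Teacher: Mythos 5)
Your argument is correct and is essentially the paper's own proof: your $q_\gamma(x)=q(\gamma x)$ is exactly $\Psi_{\gamma^{-1}}[q]$, and applying P\'olya's Lemma to it followed by the dilation $\Psi_\gamma$ (which preserves $\cP(\N)$, up to harmless positive normalization constants) is precisely the substitution the paper performs via the multiplicativity of $\Psi_\gamma$. No issues to report.
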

\begin{proof}
Note that $\Psi_\gamma$ maps  $x+1$ to $x+\gamma$ (omitting normalization). Fix $q \in \cM$. By Lemma~\ref{lem:polya}, we know that  $\Psi_{\gamma^{-1}}[q](x) \cdot (x+1)^{n}$ is in $\cP$ for all $n$ large enough. Since $\Psi_\gamma$ is multiplicative,
\begin{align*}
    \Psi_\gamma\big[\Psi_{\gamma^{-1}}[q](x) \cdot (x+1)^{n}\big] = q(x) \cdot \Psi_\gamma[(x+1)^n] = q(x)(x+\gamma)^n,
\end{align*}
and since $\Psi_\gamma$ maps $\cP$ to $\cP$, it follows that $q(x)(x+\gamma)^n$ is also in $\cP$ for all $n$ large enough.
\end{proof}

\section{Support-preserving multiplicative maps of polynomials with non-negative coefficients}

In this section, we prepare the ingredients for proofs of Theorems~\ref{thm:main},~\ref{thm:main-poly}, and~\ref{thm:main-poly-rational} and then prove the theorems. 
The first ingredient is extending a multiplicative support-preserving $\Phi$ from the set of polynomials with non-negative coefficients $\cP$ to the set $\cM$ of polynomials without positive roots. 

\subsection{Extending the domain of $\Phi$}  In this section we prove the following result.
\begin{proposition}\label{prop:extension}
  Every support-preserving multiplicative $\Phi \colon \cP \to \cP$ can be (uniquely) extended to a degree-preserving multiplicative $\Phi \colon \cM \to \cM$.
\end{proposition}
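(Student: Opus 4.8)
The plan is to define the extension via the formula $\Phi[p] = \Phi[p\cdot r]/\Phi[r]$ suggested in the introduction, where $r \in \cP(\N)$ is chosen so that $p \cdot r \in \cP(\N)$ (such $r$ exists by Lemma~\ref{lem:rs}, or more concretely by Lemma~\ref{lem:polya-r} with $r = (x+1)^n$), and then to verify the three things that need checking: (i) well-definedness, i.e.\ independence of the auxiliary factor $r$; (ii) that the resulting object $\Phi[p]$ is an honest polynomial lying in $\cM(\N)$, not merely a rational function; and (iii) multiplicativity and degree-preservation of the extended map, plus uniqueness.

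For well-definedness, suppose $r, r' \in \cP(\N)$ both satisfy $p r, p r' \in \cP(\N)$. Then $(pr)\cdot r' = (p r')\cdot r$ as an identity in $\cP(\N)$, so by multiplicativity of the original $\Phi$ on $\cP(\N)$ we get $\Phi[pr]\cdot\Phi[r'] = \Phi[pr']\cdot\Phi[r]$, hence $\Phi[pr]/\Phi[r] = \Phi[pr']/\Phi[r']$ as rational functions; this is the standard localization argument and is routine. Multiplicativity of the extension is then immediate: given $p, q \in \cM(\N)$, pick $r, s \in \cP(\N)$ with $pr, qs \in \cP(\N)$; then $rs$ witnesses $pq \in \cM(\N)$ (since $(pq)(rs) = (pr)(qs) \in \cP(\N)$), and $\Phi[pq] = \Phi[(pr)(qs)]/\Phi[rs] = (\Phi[pr]\Phi[qs])/(\Phi[r]\Phi[s]) = \Phi[p]\Phi[q]$. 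Uniqueness is forced by the same formula: any multiplicative extension must satisfy $\Phi[p]\Phi[r] = \Phi[pr]$, and the right side is determined by the original map.

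The main obstacle is step (ii): a priori $\Phi[p] = \Phi[pr]/\Phi[r]$ is only a ratio of polynomials with non-negative coefficients, and one must show the denominator divides the numerator and that the quotient has no positive roots and can be normalized into $\cM(\N)$. The natural approach is to work with the multiplicative structure. First, since $\Phi$ is support-preserving on $\cP(\N)$, it sends $x$ (the monic monomial, up to normalization a point mass) to a monomial $c x^k$; support-preservation forces $k=1$, so $\Phi[x] = x$ up to normalization, and similarly $\Phi$ fixes constants. Next I would show the extension is degree-preserving on $\cP(\N)$ first, or at least on $\cM(\N)$: from $\deg(\Phi[pr]) \le \deg(pr) = \deg(p)+\deg(r)$ and $\deg(\Phi[r]) \le \deg(r)$, together with the reverse inequalities obtained by factoring out monomials and using support-preservation (the coefficient of $x^0$ in $p$ is positive iff it is in $\Phi[p]$, and the top coefficient likewise), one pins down $\deg \Phi[p]$ exactly, which then shows $\Phi[r]$ genuinely divides $\Phi[pr]$ with quotient of degree $\deg(p)$. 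Finally, to see the quotient has no positive roots: if $\Phi[p](x_0) = 0$ for some $x_0 > 0$, then choosing $r$ with $pr \in \cP(\N)$ gives $\Phi[pr](x_0) = \Phi[p](x_0)\Phi[r](x_0) = 0$; but $\Phi[pr] \in \cP(\N)$ and a polynomial with non-negative coefficients vanishes at a positive point only if it is the zero polynomial (or has that point's contribution forced to zero, impossible here since $\Phi[r](x_0) > 0$ as $r \in \cP(\N)$ is not the zero polynomial and $\cP(\N)$ polynomials are strictly positive on the positive reals when they have a nonzero constant-or-leading structure — more carefully, $\Phi[r](x_0) > 0$ because $\Phi[r] \in \cP(\N) \subseteq \cM(\N)$). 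This contradiction shows $\Phi[p] \in \cM(\N)$ after normalization, completing the argument.
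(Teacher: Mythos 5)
Your steps (i), (iii), uniqueness, and the final ``no positive roots'' observation all match the paper's argument and are fine. The problem is step (ii), which is exactly the heart of the proposition, and there your proposal has a genuine gap: you assert that pinning down $\deg\Phi[p]$ ``shows $\Phi[r]$ genuinely divides $\Phi[pr]$ with quotient of degree $\deg(p)$.'' Degree bookkeeping cannot do this. (Also note that on $\cP(\N)$ degrees are preserved exactly by support-preservation, so there is nothing to pin down there.) Knowing the degrees of $\Phi[pr]$ and $\Phi[r]$ only tells you what the degree of the quotient \emph{would} be if it were a polynomial; a priori $\Phi[pr]/\Phi[r]$ is a rational function in lowest terms with a nontrivial denominator, and nothing in your argument rules this out. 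Likewise $\Phi[x]=x$ and $\Phi[1]=1$ give no leverage on divisibility.

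The paper closes this gap with a specific mechanism you would need to supply (or replace by an equivalent). First it proves Claim~\ref{clm:degree-1-unequal}: $\Phi[x+a]\neq\Phi[x+b]$ for $a\neq b$, by observing that $q^{a,b}(x)=x^2-ax+ab$ satisfies $q^{a,b}\cdot(x+a)\in\cP(\N)$ and $q^{a,b}\cdot(x+b)\in\cP(\N)$ with \emph{different supports}, so their images under $\Phi$ differ, forcing $\Phi[x+a]\neq\Phi[x+b]$. Then, given $q\in\cM(\N)$, it uses Lemma~\ref{lem:polya-r} to get $n$ with both $q(x)(x+a)^n$ and $q(x)(x+b)^n$ in $\cP(\N)$, writes $\Phi[q]$ in the two ways $\Phi[q(x)(x+a)^n]/(x+c)^n=\Phi[q(x)(x+b)^n]/(x+d)^n$ with $c\neq d$, and uses coprimality of $(x+c)^n$ and $(x+d)^n$ in the cross-multiplied polynomial identity to conclude that $(x+c)^n$ divides $\Phi[q(x)(x+a)^n]$, so the quotient is a polynomial; degree-preservation and membership in $\cM(\N)$ then follow as in your sketch. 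Without an argument of this ``two coprime denominators'' type, your step (ii) does not go through.
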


Let
\begin{align*}
    \F(\Z_{\geq 0}) = \left\{\frac{p'}{p}\,:\, p,p' \in \cP\right\}.
\end{align*}
The first step towards proving Proposition~\ref{prop:extension} is to extend a support-preserving multiplicative $\Phi \colon \cP \to \cP$ to a multiplicative $\Phi \colon \cM \to \F(\Z_{\geq 0})$. 

To this end, given $q \in \cM$, there is, by Lemma~\ref{lem:rs}, $p,r \in \cP$  such that $ q \cdot p = r$. Define the extension of $\Phi$ to $\cM$ by
  \begin{align}
  \label{eq:extension}
       \Phi[q] = \frac{\Phi[r]}{\Phi[p]}.
  \end{align}
  To see that this is well defined, suppose that $q \cdot p' =r'$ for some $p',r' \in \cP$, and note that $q \cdot p \cdot p' \in \cP$. Thus 
  \begin{align*}
    \Phi[r] \cdot \Phi[p'] = \Phi[q \cdot p] \cdot \Phi[p'] =\Phi[q \cdot p \cdot p'] =  \Phi[q \cdot p'] \cdot \Phi[p]=\Phi[r']\cdot \Phi[p],
  \end{align*}
  and so 
  \begin{align*}
      \frac{\Phi[r]}{\Phi[p]} =\frac{\Phi[r']}{\Phi[p']}.
  \end{align*}

  To see that $\Phi$ is multiplicative, i.e., that  $\Phi[q \cdot q'] = \Phi[q] \cdot \Phi[q']$ for all $q,q' \in \cM$, suppose that $q\cdot p$ and  $q'\cdot p'$ belong to $\cP$. Thus $q \cdot q' \cdot p \cdot p' \in \cP$. Hence,
  \begin{align*}
     \Phi[q \cdot q'] = \frac{\Phi[q \cdot q \cdot p \cdot p']}{\Phi[p \cdot p']} = \frac{\Phi[q \cdot p ]}{\Phi[p]}\frac{\Phi[q' \cdot p']}{\Phi[p']} = \Phi[q] \cdot \Phi[q'].
  \end{align*}
  We note that the extension defined by \eqref{eq:extension} is unique, since any extension that satisfies $\Phi[q \cdot q'] = \Phi[q] \cdot \Phi[q']$ must satisfy \eqref{eq:extension} whenever $q \cdot p = r$.

To prove Proposition~\ref{prop:extension}, we need to show that the image of this extension is, in fact, in the polynomials. To this end, for $0 < a < b$, let\footnote{As discussed in \S\ref{sec:prelim}, we omit a normalizing constant and write $q^{a,b}(x)$ as above, rather than $(x^2-ax+ab)/(1-a+ab)$.}
\begin{align*}
    q^{a,b}(x)=x^2-a x+a b.
\end{align*} 
Note that $q^{a,b} \in \cM$.

Then
\begin{align*}
    q^{a,b}(x)\cdot(x+t) = (x^2-a x+a b)\cdot (x+t) = x^3+(t-a)x^2+a(b-t)x+a b t,
\end{align*}
and in particular
\begin{align*}
    q^{a,b}(x)\cdot(x+a) &= x^3+a(b-a)x +a^2 b \\
    q^{a,b}(x)\cdot(x+b) &= x^3+(b-a)x^2 
 +a b^2.
\end{align*}
are both in $\cP$. Importantly, they have different supports. By  \eqref{eq:extension},
\begin{align*}
    \Phi[q^{a,b}] = \frac{\Phi[q^{a,b}(x)\cdot(x+a)]}{\Phi[x+a]} = \frac{\Phi[q^{a,b}(x)\cdot(x+b)]}{\Phi[x+b]}.
\end{align*}
Now, because $q^{a,b}(x)\cdot(x+a)$ and $ q^{a,b}(x)\cdot(x+b)$ have different supports, it follows from the support-preserving property of $\Phi$ that
\begin{align*}
    \Phi[q^{a,b}(x)\cdot(x+a)] \neq \Phi[q^{a,b}(x)\cdot(x+b)].
\end{align*}
Hence $\Phi[x+a] \neq \Phi[x+b]$. We have thus proved the following claim:\footnote{Again, we omit normalization constants as discussed in \S\ref{sec:prelim}. With these constants the claim would be that $a \neq b$ implies $\Phi[(x+a)/(1+a)] \neq \Phi[(x+b)/(1+b)]$.}
\begin{claim}
\label{clm:degree-1-unequal}
Suppose that $\Phi$ is support-preserving and multiplicative. Then  $\Phi[x+a] \neq \Phi[x+b]$ for $a \neq b$.
\end{claim}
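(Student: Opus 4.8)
The plan is to exploit the failure of unique factorization in $\cM(\N)$: I look for a single polynomial $Q \in \cM(\N)$ that can be written both as $Q\cdot(x+a) = r_1$ and as $Q\cdot(x+b) = r_2$ with $r_1, r_2 \in \cP(\N)$, and such that $r_1$ and $r_2$ have \emph{different supports}. Granting such a $Q$, the conclusion is immediate: multiplying $Q\cdot(x+a)$ by $(x+b)$ keeps us inside $\cP(\N)$, so multiplicativity of $\Phi$ on $\cP(\N)$ gives
\begin{align*}
\Phi[r_1]\cdot\Phi[x+b] = \Phi[r_1\cdot(x+b)] = \Phi[Q\cdot(x+a)\cdot(x+b)] = \Phi[r_2\cdot(x+a)] = \Phi[r_2]\cdot\Phi[x+a].
\end{align*}
If we had $\Phi[x+a] = \Phi[x+b]$, then cancelling this common nonzero polynomial would force $\Phi[r_1] = \Phi[r_2]$, contradicting the support-preserving property since $r_1$ and $r_2$ have different supports. (Equivalently, one may route this through the well-definedness of the extension in \eqref{eq:extension}: both $\Phi[r_1]/\Phi[x+a]$ and $\Phi[r_2]/\Phi[x+b]$ must equal $\Phi[Q]$.)

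It remains to produce $Q$. I would try the quadratic ansatz $Q = x^2 + \beta x + \gamma$, for which $Q\cdot(x+t) = x^3 + (\beta+t)x^2 + (\gamma+\beta t)x + \gamma t$. To make the supports of $Q\cdot(x+a)$ and $Q\cdot(x+b)$ differ, I force one coefficient to vanish in each product: taking $\beta = -a$ kills the $x^2$-coefficient of $Q\cdot(x+a)$, and then taking $\gamma = ab$ kills the $x$-coefficient of $Q\cdot(x+b)$; this is exactly $Q = q^{a,b}(x) = x^2 - ax + ab$. Assuming without loss of generality that $0 < a < b$ (the remaining cases, in which one of $a,b$ vanishes, are trivial, since then one of $x+a, x+b$ equals $x$, whose image has singleton support while the other's has two-element support), one verifies: $q^{a,b}$ has negative discriminant $a^2 - 4ab < 0$, hence lies in $\cM(\N)$; $q^{a,b}\cdot(x+a) = x^3 + a(b-a)x + a^2 b$ has support $\{0,1,3\}$ and lies in $\cP(\N)$; and $q^{a,b}\cdot(x+b) = x^3 + (b-a)x^2 + ab^2$ has support $\{0,2,3\}$ and lies in $\cP(\N)$. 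Since $\{0,1,3\} \neq \{0,2,3\}$, the two images have different supports, completing the argument.

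The real content is the construction of $q^{a,b}$; everything after it is a short computation together with one application of support-preservation, so I do not anticipate a genuine obstacle. The only points requiring care are bookkeeping ones: first, the argument must use only multiplicativity of $\Phi$ on $\cP(\N)$ (and support-preservation), since this claim is invoked precisely when setting up the extension to $\cM(\N)$, so I would keep the two arguments logically separate to avoid circularity; second, the claim is symmetric in $a$ and $b$, which is what licenses the reduction to $0 < a < b$ and, correspondingly, the strict inequality $a(b-a) > 0$ that places $q^{a,b}\cdot(x+a)$ in $\cP(\N)$.
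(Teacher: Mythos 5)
Your proposal is correct and follows essentially the same route as the paper: the paper also constructs $q^{a,b}(x)=x^2-ax+ab$ and compares the supports of $q^{a,b}\cdot(x+a)$ and $q^{a,b}\cdot(x+b)$, concluding via the support-preserving property. The only cosmetic difference is that the paper phrases the final step through the extension formula \eqref{eq:extension} (which is already well-defined at that point, so no circularity arises), whereas you cross-multiply directly within $\cP(\N)$ --- the two are equivalent, and you note the extension route yourself.
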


With this claim, we are ready to prove our proposition.
\begin{proof}[Proof of Proposition~\ref{prop:extension}]
Let $\Phi \colon \cP \to \cP$ be support-preserving and multiplicative. Extend it to $\Phi \colon \cP \to \F(\Z_{\geq 0})$ using \eqref{eq:extension}. Choose any $a,b >0$, $a \neq b$, and fix $q \in \cM$. By Lemma~\ref{lem:polya-r}, there is $n$ large enough so that both $q(x)(x+a)^n$ and $q(x)(x+b)^n$ are in $\cP$. We thus have that
\begin{align*}
    \Phi[q] = \frac{\Phi[q(x)\cdot(x+a)^n]}{\Phi[(x+a)^n]} = \frac{\Phi[q(x)\cdot(x+b)^n]}{\Phi[(x+b)^n]}.
\end{align*}
By Claim~\ref{clm:degree-1-unequal} and the support-preserving property, we know that there are some $c \neq d$ such that $\Phi[x+a] = x+c$ and $\Phi[x+b] = x + d$. Hence,
\begin{align*}
    \frac{\Phi[q(x)\cdot(x+a)^n]}{(x+c)^n} = \frac{\Phi[q(x)\cdot(x+b)^n]}{(x+d)^n},
\end{align*}
and rearranging we get
\begin{align*}
    \Phi[q(x)\cdot(x+a)^n](x+d)^n = \Phi[q(x)\cdot(x+b)^n]\cdot (x+c)^n.
\end{align*}
Both sides are polynomials and have the same roots. Of these roots, at least $n$ are equal to $-c$, since the right-hand side includes the term $(x+c)^n$. The left-hand side thus also has at least $n$ roots which are equal to $-c$. Since $c \neq d$, $\Phi[q(x)\cdot(x+a)^n]$ has at least $n$ roots equal to $-c$. It follows that
\begin{align*}
    \Phi[q] = \frac{\Phi[q(x)\cdot(x+a)^n]}{(x+c)^n}
\end{align*}
is a polynomial. Furthermore, it is in $\cM$, since it does not have positive roots. Finally, by the support-preserving property of $\Phi$,
\begin{align*}
    \deg(\Phi[q(x) \cdot (x+a)^n]) =\deg(q(x) \cdot (x+a)^n) = \deg(q)+n.
\end{align*}
On the other hand, since $\Phi$ is a multiplicative,
\begin{align*}
    \deg(\Phi[q(x) \cdot (x+a)^n])
    = \deg(\Phi[q] \cdot \Phi[(x+a)^n]) = \deg(\Phi[q]) + n,
\end{align*}
and so $\deg(\Phi[q]) = \deg(q)$.
\end{proof}

\subsection{Linear polynomials and some quadratic polynomials}\label{sec_binomials}

In this section, we prove the following proposition.
\begin{proposition}
\label{prop:binomials}
Let $\Phi \colon \cP \to \cP$ be  support-preserving and multiplicative. Then there exists a constant $\gamma > 0$ such that $\Phi[x+a] = x+\gamma a$ for all $a>0$.
\end{proposition}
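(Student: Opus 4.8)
The plan is to first reduce the proposition to a statement about a single function of one variable, and then to force Cauchy's functional equation on that function by exhibiting a single polynomial sitting on a facet of the cone $\cP(\N)$. By Proposition~\ref{prop:extension} I may regard $\Phi$ as a degree-preserving multiplicative map $\cM(\N)\to\cM(\N)$. Since $\Phi$ is degree- and support-preserving on $\cP(\N)$, $\Phi[x+a]$ is a degree-one element of $\cP(\N)$ with support $\{0,1\}$, hence $\Phi[x+a]=x+f(a)$ for some $f\colon(0,\infty)\to(0,\infty)$. The proposition is then equivalent to the assertion that $f$ is linear, with $\gamma=f(1)$, and everything reduces to proving this.

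The first step is to read off from the quadratics $q^{a,b}$ both the subleading coefficient of $\Phi[q^{a,b}]$ and the monotonicity of $f$. Fix $0<a<b$. As already computed in the proof of Proposition~\ref{prop:extension}, $q^{a,b}(x)(x+a)=x^3+a(b-a)x+a^2b$ has support $\{0,1,3\}$ and $q^{a,b}(x)(x+b)=x^3+(b-a)x^2+ab^2$ has support $\{0,2,3\}$. Writing $\Phi[q^{a,b}]=x^2+\alpha x+\beta$ in monic form and using multiplicativity, $\Phi[q^{a,b}(x)(x+a)]=(x^2+\alpha x+\beta)(x+f(a))$ must have vanishing $x^2$-coefficient (its support is $\{0,1,3\}$), which forces $\alpha=-f(a)$. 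Then $\Phi[q^{a,b}(x)(x+b)]=(x^2-f(a)x+\beta)(x+f(b))$ has $x^2$-coefficient $f(b)-f(a)$, which must be strictly positive (its support is $\{0,2,3\}$), so $f(a)<f(b)$. Thus $f$ is strictly increasing, and the coefficient of $x^{1}$ in $\Phi[q^{a,b}]$ equals $-f(a)$.

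For the crux I would fix arbitrary $a,c>0$, pick $b$ large (for instance $b>2(a+c)$ will do), and consider
\[
  P(x)=q^{a,b}(x)\,q^{c,b}(x)\,\bigl(x+(a+c)\bigr).
\]
The structural point is that the $x^4$-coefficient of $q^{A,B}(x)\,q^{C,D}(x)\,(x+S)$ is $S-A-C$, independently of $B$ and $D$, since it comes only from the two top coefficients of each factor. Here $S=a+c=A+C$, so the $x^4$-coefficient of $P$ vanishes; a routine check of the remaining coefficients shows that for $b$ large $P\in\cP(\N)$ with support exactly $\{0,1,2,3,5\}$. Applying $\Phi$, multiplicativity together with the previous step gives $\Phi[P]=(x^2-f(a)x+\beta)(x^2-f(c)x+\beta')(x+f(a+c))$ for some constants $\beta,\beta'$ we need not determine, and the same structural point shows that the $x^4$-coefficient of $\Phi[P]$ is $f(a+c)-f(a)-f(c)$. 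Because $\Phi$ is support-preserving, this coefficient must be $0$, so $f(a+c)=f(a)+f(c)$ for all $a,c>0$. A strictly increasing (hence monotone) solution of Cauchy's equation on $(0,\infty)$ is linear, so $f(a)=\gamma a$ with $\gamma=f(1)>0$, giving $\Phi[x+a]=x+\gamma a$.

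I expect the main obstacle to be the design of $P$: one needs a member of $\cP(\N)$ whose support omits an intermediate exponent in a \emph{robust} way, so that support-preservation becomes a genuine additive identity among values of $f$ rather than a trivial coincidence of two parameters. The product $q^{a,b}(x)q^{c,b}(x)(x+(a+c))$ achieves this precisely because the relevant coefficient of such a product depends on the linear factor only through $S-A-C$, so the choice $S=A+C$ annihilates it exactly. Everything else is bookkeeping: verifying that the other coefficients of $P$ are positive (which is the reason $b$ is taken large), tracking the $x^4$-coefficient of $\Phi[P]$ via the already-identified subleading coefficient of $\Phi[q^{a,b}]$, and invoking the elementary rigidity of monotone solutions of Cauchy's functional equation.
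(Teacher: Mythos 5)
Your proof is correct, and while its first half coincides with the paper's own argument (the extension via Proposition~\ref{prop:extension}, reading off the subleading coefficient $-f(a)$ of $\Phi[q^{a,b}]$ from the support-$\{0,1,3\}$ product $q^{a,b}(x)(x+a)$, and strict monotonicity of $f$ from the support-$\{0,2,3\}$ product), the crux is genuinely different. The paper multiplies the single quadratic $q^{a,b}$ by $(x+2b)^2$ and $(x+3b)^3$, whose products have a gap at the $x^1$ coefficient; support preservation then yields the scaling relations $\varphi(2b)=2\varphi(b)$ and $\varphi(3b)=3\varphi(b)$, and a separate lemma (Lemma~\ref{lem:func}, via density of $\{2^m3^n\}$) converts these together with monotonicity into linearity. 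You instead take the quintic $q^{a,b}(x)\,q^{c,b}(x)\,(x+(a+c))$, whose gap sits at the top-end coefficient $x^4$; since that coefficient of a monic (quadratic)(quadratic)(linear) product is just the sum of the subleading coefficients, support preservation gives the full Cauchy equation $f(a+c)=f(a)+f(c)$ outright, and a monotone (indeed, merely positive) additive function on $(0,\infty)$ is linear. I checked your ``routine'' positivity claim: with $s=a+c$ the coefficients of $P$ at degrees $5,4,3,2,1,0$ are $1$, $0$, $(a+c)(b-(a+c))+ac$, $b(a^2+c^2)+ac(a+c)$, $abc(b-2(a+c))$, $(a+c)ab^2c$, so $b>2(a+c)$ does give support exactly $\{0,1,2,3,5\}$ as claimed. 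Your route buys two small simplifications: because the missing coefficient is at the top of the product, you never need to determine the constant terms $\beta,\beta'$ (the paper must first establish $\Phi[q^{a,b}]=x^2-\varphi(a)x+\varphi(a)\varphi(b)$, i.e.\ \eqref{eq:easy-quad}, precisely because its gap at $x^1$ involves them), and you land on the standard Cauchy equation rather than the bespoke $2$- and $3$-homogeneity lemma; positivity of $f$ even makes the monotonicity step dispensable here. What the paper's choice buys is that its auxiliary polynomials involve only one quadratic factor times powers of a single binomial, and the identity \eqref{eq:easy-quad} it extracts along the way is reused later (Corollary~\ref{cor:easy-quad}) in the treatment of general quadratics, so your shortcut streamlines this proposition but would not replace that subsequent ingredient.
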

Let $\Phi \colon \cP \to \cP$ be  support-preserving and multiplicative. Consider $\varphi\colon \R_{>0} \to \R_{>0}$  given  by $\Phi[x+t]=x+\varphi(t)$. We will show that $\varphi(a)=\gamma a$, thus proving Proposition~\ref{prop:binomials}. As a byproduct, we also show that $\Phi[x^2- a x + ab] = x^2-\gamma a x+ \gamma^2 a b$ for all $0 < a < b$  (Corollary~\ref{cor:easy-quad}).

As above, let $q^{a,b}(x)=x^2-a x+a b$, for $0 < a < b$. Then 
\begin{align*}
    q^{a,b}(x)\cdot(x+t) = (x^2-a x+a b)\cdot (x+t) = x^3+(t-a)x^2+a(b-t)x+a b t
\end{align*}
is in $\cP$ for any $t \in [a,b]$. By Proposition~\ref{prop:extension},
\begin{align}
\label{eq:Tqa}
    \Phi[q^{a,b}] = \frac{\Phi[q^{a,b}(x) \cdot (x+t)]}{\Phi[x+t]}= \frac{\Phi[x^3+(t-a)x^2+a(b- t)x+a b t]}{x+\varphi(t)}
\end{align}
is a polynomial for all $0<a<b$ and $t \in [a,b]$. In particular, for $t=a$ we get that
\begin{align*}
    \Phi[q^{a,b}] = \frac{\Phi[x^3+a( b-a)x+a^2 b ]}{x+\varphi(a)}
\end{align*}
is a polynomial. Using the support-preserving property of $\Phi$, we can write the numerator as $\Phi(x^3+a( b-a)x+a^2b) = x^3+c x+d$, which must have $x+\varphi(a)$ as a factor. Factoring $x+\varphi(a)$ from this polynomial yields that 
\begin{align}
\label{eq:Tqa1}
    \Phi[q^{a,b}] = x^2-\varphi(a)x+\varphi(a)^2+c.
\end{align}
Similarly, substituting $t=b$ into \eqref{eq:Tqa} we get
\begin{align*}
    \Phi[q^{a,b}] = \frac{\Phi(x^3+(b-a)x^2+a b^2)}{x+\varphi(b)}.
\end{align*}
Writing the numerator as $\Phi[x^3+(b-a)x+a b^2] = x^3+e x^2+f$, and since $x+\varphi(b)$ is a factor of this polynomial, we get that 
\begin{align}
  \label{eq:Tqa2}
    \Phi[q^{a,b}] = x^2-(\varphi(b)-e)x+\varphi(b)^2-e \varphi(b).
\end{align}
Equating \eqref{eq:Tqa1}  and \eqref{eq:Tqa2}, we get that $\varphi(a)=\varphi(b)-e$. This then yields that 
\begin{align}
\label{eq:easy-quad}
    \Phi[q^{a,b}] = x^2-\varphi(a)x+\varphi(a)\varphi(b).
\end{align}
Now, choose $t \in (a,b)$. Then $q^{a,b}(x)\cdot(x+t)$ is a cubic polynomial with positive coefficients and, by the above,
\begin{align*}
    \Phi[q^{a,b}(x)\cdot(x+t)] 
    &= (x^2-\varphi(a)x+\varphi(a)\varphi(b))(x+\varphi(t))\\
    &= x^3+(\varphi(t)-\varphi(a))x^2+\cdots.
\end{align*}
By the support-preserving property of $\Phi$, the coefficient of $x^2$ must be positive, and so we have shown that $\varphi$ is strictly monotone increasing:
\begin{claim}
\label{clm:monotone}
If $0 < a < b$, then $\varphi(a) < \varphi(b)$.
\end{claim}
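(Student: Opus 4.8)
The plan is to exploit the formula \eqref{eq:easy-quad} together with the support-preserving property applied to a well-chosen cubic. First, fix $0 < a < b$ and pick any $t$ with $a < t < b$. Then
\begin{align*}
    q^{a,b}(x)\cdot(x+t) = x^3+(t-a)x^2+a(b-t)x+abt
\end{align*}
has strictly positive coefficients, so it lies in $\cP(\N)$ and its support includes the monomial $x^2$. Since $\Phi$ is multiplicative, \eqref{eq:easy-quad} gives
\begin{align*}
    \Phi[q^{a,b}(x)\cdot(x+t)] = \bigl(x^2-\varphi(a)x+\varphi(a)\varphi(b)\bigr)\bigl(x+\varphi(t)\bigr) = x^3+(\varphi(t)-\varphi(a))x^2+\cdots,
\end{align*}
and the support-preserving property forces the coefficient of $x^2$ to be positive, i.e. $\varphi(a) < \varphi(t)$.

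This already shows that $\varphi(a) < \varphi(t)$ whenever $a < t$ and there is some $b > t$ available to build $q^{a,b}$ — which there always is. To conclude $\varphi(a) < \varphi(b)$ for the originally given pair $0 < a < b$, I would apply the previous paragraph with $b$ replaced by an arbitrary $b' > b$ and with $t := b \in (a,b')$: the cubic $q^{a,b'}(x)\cdot(x+b)$ has positive coefficients, its $\Phi$-image has $x^2$-coefficient equal to $\varphi(b)-\varphi(a)$ by \eqref{eq:easy-quad} (now applied to the pair $a < b'$), and positivity of that coefficient yields $\varphi(a) < \varphi(b)$.

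The only mild point to get right — the ``hard part,'' such as it is — is precisely this endpoint issue: the direct coefficient computation only controls $\varphi$ at interior points $t \in (a,b)$, so one must enlarge the interval past $b$ to reach the endpoint. Beyond that, the argument is a single coefficient comparison invoking support-preservation, and I anticipate no further obstacle.
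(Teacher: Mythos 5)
Your argument is correct and is essentially the paper's own proof: both apply $\Phi$ to the cubic $q^{a,b}(x)\cdot(x+t)$ with $t$ interior, use \eqref{eq:easy-quad}, and read off positivity of the $x^2$-coefficient from support-preservation. Your explicit handling of the endpoint (enlarging to $b'>b$ and taking $t=b$) is a detail the paper leaves implicit, but it is the same mechanism.
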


We are now ready to prove the main result of this section.
\begin{proof}[Proof of Proposition~\ref{prop:binomials}]
Note that
\begin{align*}
  q^{a,b}(x)\cdot(x+2b)^2
  &= (x^2-a x+a b)\cdot (x+2b)^2\\
  &= x^4 + (4 b-a) x^3+ b(4 b-3 a) x^2  + 4 a b^3.
\end{align*}
is in $\cP$. Since
\begin{align*}
  \Phi[q^{a,b}(x)]\cdot \Phi[(x+2b)^2]
  &= (x^2-\varphi(a)x+\varphi(a)\varphi(b))\cdot (x+\varphi(2b))^2\\
  &= \cdots + (2 \varphi(a) \varphi(b) \varphi(2b) - \varphi(a) \varphi(2b)^2) x + \cdots,
\end{align*}
it follows from the support-preserving property of $\Phi$ that
\begin{align*}
  2 \varphi(a) \varphi(b) \varphi(2b) - \varphi(a) \varphi(2b)^2 = 0
\end{align*}
or
\begin{align*}
    \varphi(2b) = 2 \varphi(b).
\end{align*}

Likewise,
\begin{align*}
  q^{a,b}(x)\cdot(x+3b)^3
  &= (x^2-a x+a b)\cdot (x+3b)^3\\
  &= x^5 + (9b-a)x^4 + b(27b-8a) x^3+ b^2(27b-18a) x^2  + 27 a b^4.
\end{align*}
is in $\cP$ for $b>a$. Since
\begin{align*}
  \Phi[q^{a,b}(x)]\cdot \Phi[(x+3b)^3]
  &= (x^2-\varphi(a)x+\varphi(a)\varphi(b))\cdot (x+\varphi(3b))^3\\
  &= \cdots + (3 \varphi(a) \varphi(b) \varphi(3b)^2 - \varphi(a) \varphi(3b)^3) x + \cdots,
\end{align*}
again applying the support-preserving property of $\Phi$ yields that
\begin{align*}
    3 \varphi(a) \varphi(b) \varphi(3b)^2 - \varphi(a) \varphi(3b)^3 = 0,
\end{align*}
or 
\begin{align*}
    \varphi(3b) = 3\varphi(b).
\end{align*}
It thus follows from Lemma~\ref{lem:func} below that there exists a constant $\gamma > 0$ such that $\varphi(b) = \gamma b$.
\end{proof}

\begin{lemma}
\label{lem:func}
Suppose that $f \colon \R_{>0} \to \R_{>0}$ is strictly monotone increasing, and satisfies $f(2x)=2f(x)$ and $f(3x)=3f(x)$. Then there exists a constant $c>0$ such that $f(x) = c x$.
\end{lemma}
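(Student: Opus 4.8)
The plan is to show that the multiplicative relations $f(2x)=2f(x)$ and $f(3x)=3f(x)$, together with the monotonicity of $f$, force $f$ to be linear by a density argument on the dyadic--triadic scaling group.

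First, I would iterate the two functional equations: $f(2^j 3^k x) = 2^j 3^k f(x)$ for all integers $j,k \geq 0$. To get negative powers, note that from $f(2x)=2f(x)$ applied at $x/2$ we get $f(x) = 2f(x/2)$, so $f(x/2) = f(x)/2$, and similarly $f(x/3)=f(x)/3$; hence $f(2^j 3^k x) = 2^j 3^k f(x)$ for \emph{all} $j,k \in \Z$. In particular, fixing a reference point, say evaluating at $x=1$ and setting $c = f(1) > 0$, we obtain $f(2^j 3^k) = c\, 2^j 3^k$ for all $j,k \in \Z$.

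Next I would use monotonicity to pin down $f$ everywhere. The key number-theoretic fact is that the set $\{2^j 3^k : j,k \in \Z\}$ is dense in $\R_{>0}$; equivalently, $\{j \log 2 + k \log 3 : j,k \in \Z\}$ is dense in $\R$, which holds because $\log 2 / \log 3$ is irrational (if it were rational, say $p/q$, then $2^q = 3^p$, impossible by unique factorization). Given any $x > 0$ and any $\eps > 0$, choose dyadic--triadic numbers $u < x < v$ with $v/u$ close to $1$, so that $cu$ and $cv$ are within $\eps$ of $cx$. Monotonicity gives $f(u) \leq f(x) \leq f(v)$, i.e. $cu \leq f(x) \leq cv$, and letting $\eps \to 0$ forces $f(x) = cx$. (Strictness of the monotonicity is not even needed here—weak monotonicity suffices—but it is what we have.)

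The main obstacle, such as it is, is the density statement and its clean deployment: one must be a little careful that the approximating dyadic--triadic numbers can be taken on \emph{both} sides of $x$ and with ratio tending to $1$, which follows immediately once density of the multiplicative group $\{2^j 3^k\}$ in $\R_{>0}$ is established (take two such numbers in the interval $(x(1-\delta), x(1+\delta))$ straddling $x$, or, if $x$ itself happens to be dyadic--triadic, we are already done). Everything else is routine bookkeeping with the functional equations, so I would state the density lemma, prove the irrationality of $\log 2/\log 3$ in one line, and then conclude.
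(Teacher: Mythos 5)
Your proof is correct and follows essentially the same route as the paper: establish $f(2^j3^kx)=2^j3^kf(x)$ for all integer exponents, use density of $\{2^j3^k\}$ in $\R_{>0}$ (via irrationality of $\log 2/\log 3$), and sandwich $f(x)$ between values at nearby dyadic--triadic points using monotonicity. Your write-up is in fact slightly more detailed than the paper's (explicitly handling negative exponents and the density argument), and your remark that weak monotonicity suffices is accurate.
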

\begin{proof}
Since 2 and 3 are coprime, the set
$X = \{2^m 3^n \,:\, m, n \in \Z\}$ is dense in $\R_{>0}$. Since $f(2x)=2f(x)$ and $f(3x)=3f(x)$, we obtain that $f(x) = x f(1)$ for all $x \in X$.  Given any $y \in \R_{>0}$, choose a sequence $(x_n^+)_n$ in $X$ that converges to $y$ from above, and likewise $(x_n^-)_n$ that converges to $y$ from below. Then, by the monotonicity of $f$,
\begin{align*}
  y f(1) = \lim_n x_n^+f(1) = \lim_n f(x_n^+) \geq f(y) \geq \lim_n f(x_n^-) = \lim_n x_n^-f(1) = y f(1).
\end{align*}
In particular, $f(y)=f(1) y$.
\end{proof}

We end this section with the following result which is a corollary of Proposition~\ref{prop:binomials} and identity \eqref{eq:easy-quad}:
\begin{corollary}
\label{cor:easy-quad}
Consider an extension of a support-preserving and multiplicative $\Phi \colon \cP \to \cP$ to $\cM$ and define $\gamma$ by  $\Phi[x+1]=x+\gamma$. Then $\Phi[x^2-a x+ b a] = x^2-\gamma a x + \gamma^2 a b$ for all  $0 < a < b$.
\end{corollary}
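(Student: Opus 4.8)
The plan is to simply combine the two results cited in the statement; there is no new work to do. Recall from \S\ref{sec_binomials} that for a support-preserving multiplicative $\Phi\colon\cP(\N)\to\cP(\N)$, extended to $\cM(\N)$ via Proposition~\ref{prop:extension}, we introduced $\varphi\colon\R_{>0}\to\R_{>0}$ via $\Phi[x+t]=x+\varphi(t)$. Proposition~\ref{prop:binomials} gives a constant $\gamma>0$ with $\varphi(t)=\gamma t$ for all $t>0$; in particular $\Phi[x+1]=x+\gamma$, which is exactly the normalization used in the statement of the corollary, so the two occurrences of $\gamma$ are the same constant.

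Next I would invoke identity~\eqref{eq:easy-quad}: for $0<a<b$, writing $q^{a,b}(x)=x^2-ax+ab$, one has $\Phi[q^{a,b}]=x^2-\varphi(a)x+\varphi(a)\varphi(b)$. This identity was derived for the extension of $\Phi$ to $\cM(\N)$, and by Proposition~\ref{prop:extension} that extension is genuinely $\cM(\N)$-valued rather than only $\F(\N)$-valued, so the displayed equality is an identity of honest polynomials in $\cM(\N)$. Substituting $\varphi(a)=\gamma a$ and $\varphi(b)=\gamma b$ then gives
\[
\Phi[q^{a,b}] = x^2-\gamma a\,x+\gamma^2 ab,
\]
which is the asserted formula. (The ``$x-\gamma a x$'' in the statement is a typo for ``$x^2-\gamma a x$''.)

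I expect no real obstacle: all the substance already lies in Proposition~\ref{prop:binomials} and in the computation leading to \eqref{eq:easy-quad}. The only point requiring a moment's care is the bookkeeping of normalization constants. As in \S\ref{sec:prelim}, every polynomial here is written in monic form, so that e.g. $x^2-ax+ab$ abbreviates $(x^2-ax+ab)/(1-a+ab)$; since normalization commutes with multiplication and with $\Phi$, this introduces no ambiguity, and one only has to observe that the monic representatives on the two sides of the claimed identity agree, which they do.
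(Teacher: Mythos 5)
Your proposal is correct and matches the paper exactly: the corollary is stated there precisely as an immediate consequence of Proposition~\ref{prop:binomials} (i.e.\ $\varphi(t)=\gamma t$) substituted into identity~\eqref{eq:easy-quad}, which is what you do. Your observation that ``$x-\gamma a x$'' in the statement is a typo for ``$x^2-\gamma a x$'' is also right.
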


\subsection{Quadratic polynomials}

Consider a polynomial of the form $q^a(x)=x^2-a x+ 1$. For $q^a$ to be in $\cM$, the discriminant must be negative, i.e.,  $a<2$. 

\begin{claim}
\label{cl:invariance_small_interval0}
Suppose that $\Phi$ is support-preserving and multiplicative, and $\Phi[x+1]=x+1$. Then   $\Phi[q^a]=q^a$ for all $a \in [-1,1]\setminus \{0\}$.
\end{claim}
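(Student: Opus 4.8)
The strategy is to multiply $q^a(x) = x^2 - ax + 1$ by a well-chosen polynomial with non-negative coefficients so that the product lies in $\cP(\N)$, apply $\Phi$, and then use the already-established facts about how $\Phi$ acts on linear and on "two-sided" quadratic factors to pin down $\Phi[q^a]$. Since we assume $\Phi[x+1] = x+1$, Proposition~\ref{prop:binomials} gives $\gamma = 1$, so $\Phi[x+t] = x+t$ for all $t>0$, and Corollary~\ref{cor:easy-quad} gives $\Phi[x^2 - cx + cd] = x^2 - cx + c^2 d$ for all $0 < c < d$. The plan is to produce factorizations of a single polynomial in $\cP(\N)$ in two different ways, one involving $q^a$ and one involving only factors whose $\Phi$-image is already known; equating the two images and using support-preservation should force $\Phi[q^a] = q^a$.

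Concretely, I would look for small $t>0$ such that $q^a(x)\cdot (x+t)^n \in \cP(\N)$ (this holds for $n$ large, by Lemma~\ref{lem:polya-r}, since $q^a \in \cM(\N)$ when $|a|<2$; the case $a \in [-1,1]$ is comfortably inside this range). Then $\Phi[q^a]\cdot (x+t)^n = \Phi[q^a(x)(x+t)^n]$, and by support-preservation the right-hand side is a polynomial in $\cP(\N)$ with the same support as $q^a(x)(x+t)^n$, whose degree is $n+2$ by degree-preservation. So $\Phi[q^a] = x^2 - a'x + b'$ for some reals $a', b'$ with $q^a \in \cM(\N) \Rightarrow \Phi[q^a] \in \cM(\N)$, i.e. $b' > 0$ and $(a')^2 < 4b'$. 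To nail down $a'$ and $b'$, I would exploit an alternative factorization: for suitable parameters one can write a product such as $q^a(x)\cdot(\text{linear})^k$ in two ways, or use that $q^a(x)\cdot q^{a'}(x)$-type products, or products of $q^a$ with $q^{c,d}$ of Corollary~\ref{cor:easy-quad}, land in $\cP(\N)$ and can be regrouped. The cleanest route is probably: find $c, d$ and an integer $n$ so that $q^a(x)\cdot(x+c)(x+d) \in \cP(\N)$ and simultaneously $q^{c',d'}(x)\cdot(\text{stuff}) $ matches, letting me read off the coefficients of $\Phi[q^a]$ from coefficients that are already determined; matching the coefficient of $x$ and the constant term in $\Phi[q^a(x)(x+t)^n] = \Phi[q^a]\cdot(x+t)^n$ against the known expansion then yields $a' = a$ and $b' = 1$.

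The main obstacle I anticipate is that a single multiplication by $(x+t)^n$ only gives one polynomial identity, which is not enough to determine both unknown coefficients of $\Phi[q^a]$ — one needs a genuinely second, independent relation, and finding a factorization that produces it without already knowing $\Phi[q^a]$ is the delicate point. This is exactly the non-unique-factorization phenomenon emphasized in the introduction: I expect the proof to hinge on writing some polynomial $P \in \cP(\N)$ as $q^a \cdot (\text{known})$ and also as $(\text{known})\cdot(\text{known})$, so that $\Phi[q^a]$ is forced. A secondary subtlety is keeping track of which products actually have non-negative coefficients — the constraint $a \in [-1,1]$ (rather than the full $a \in (-2,2)$ for which $q^a \in \cM(\N)$) strongly suggests that the particular auxiliary factor used, perhaps $(x+1)^n$ or $(x^2+x+1)$ or $q^{c,d}$ with specific $c,d$, only yields a product in $\cP(\N)$ in that smaller range, and I would choose the auxiliary factor accordingly and verify the sign conditions explicitly before invoking support-preservation.
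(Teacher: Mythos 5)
Your high-level strategy---apply $\Phi$ to products lying in $\cP(\N)$ and exploit support-preservation together with the known action on linear factors---has the right flavor, but the concrete mechanism you propose does not close the argument. Multiplying by $(x+t)^n$ and ``matching the coefficient of $x$ and the constant term against the known expansion'' cannot work as stated: $\Phi[q^a(x)\cdot(x+t)^n]$ is not a known polynomial; only its \emph{support} is known. If the product $q^a(x)\cdot(x+t)^n$ has all coefficients strictly positive, support-preservation yields only positivity constraints (inequalities) on the two unknown coefficients of $\Phi[q^a]$, and nothing is pinned down. What makes the argument work is choosing products whose coefficient vectors contain zeros, so that support-preservation forces the corresponding coefficients of the image to vanish and produces genuine equations; your proposal never identifies such products, and your anticipated hinge (one polynomial factored in two different ways into known pieces) is not the mechanism used for this claim.

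Concretely, the paper's proof splits into three cases, none of which your plan supplies. For $a\in(0,1)$ the claim is immediate from Corollary~\ref{cor:easy-quad} with $b=1/a$; note that you misquote that corollary---with $\gamma=1$ its conclusion is $\Phi[x^2-cx+cd]=x^2-cx+cd$, i.e.\ the identity, not $x^2-cx+c^2d$---and reading it correctly makes this case a one-liner. For $a=1$ one uses the sparse identity $(x^2-x+1)(x+1)=x^3+1$: writing $\Phi[x^2-x+1]=x^2-a'x+b'$, the image $(x^2-a'x+b')(x+1)$ must have support $\{0,3\}$, forcing $a'=b'=1$. For $a\in[-1,0)$---a case your proposal does not address, and where $q^a\in\cP(\N)$ so its image merely has some positive coefficients $a',b'$---one multiplies by $q^{-a}$, already known to be fixed by the positive case, and uses $q^a(x)\cdot q^{-a}(x)=x^4+(2-a^2)x^2+1$: the vanishing of the $x^3$ and $x$ coefficients of the image gives two equations that force $\Phi[q^a]=q^a$. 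Without these specific identities (or substitutes playing the same role), the proposal remains a plan with a genuine gap rather than a proof.
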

\begin{proof}
For $a \in (0,1)$, we know from Corollary~\ref{cor:easy-quad} that $\Phi[q^a]=q^a$, by choosing $b = 1/a$.  We next demonstrate that $\Phi[q^a]=q^a$ for $a=1$. Let $\Phi[x^2-x+1]=x^2-a'x+b'$. Note that
$$(x^2-x+1)(x+1)=x^3+1.$$
Hence, by the support-preserving property of $\Phi$, in the expression
$$\Phi[(x^2-x+1)(x+1)]=(x^2-a'x+b')(x+1)=x^3+(1-a')x^2+(b'-a')x+ b'$$
 the coefficients of $x$ and $x^2$ must vanish. We obtain $1-a'=0$ and $b'-a'=0$. Thus $a'=b'=1$ and so $\Phi[q^a]=q^a$ for $a=1$.

Now suppose $a \in [-1,0)$. Then $q^a \in \cP$ so $\Phi(q^a) = x^2+a' x + b'$ for some $a',b' >0$. Then
\begin{align*}
    \Phi[q^{-a} \cdot q^{a}]
    &= \Phi[q^{-a}] \cdot \Phi[q^{a}]\\
    &= (x^2-a x+ 1)\cdot(x^2+a' x+ b') \\
    &= x^4 +(a'-a)x^3+ (1+b'-a a')x^2+(a'-a b')x + b'.
\end{align*}
Now,
\begin{align*}
    q^a(x) \cdot q^{-a}(x) = (x^2-a x+ 1)\cdot(x^2+a x+ 1) = x^4 + (2-a^2)x^2+1,
\end{align*}
and so,  by the support-preserving property of $\Phi$, we have that $a'-a=0$ and $a'-a b'=0$. Hence, $a'=a$ and $b'=1$. We conclude that $\Phi[q^a]=q^a$ for $a \in [-1,1]\setminus\{0\}$. 
\end{proof}

Define $\pi \colon \cM \to \cM$ to be the map $q(x) \mapsto q(x^2)$. Let $\Phi^{\pi} = \pi^{-1} \circ \Phi \circ \pi$, so that
\begin{align*}
    \Phi^{\pi}[q_0+q_1 x + q_2 x^2 + \cdots + q_d x^d] = r_0 + r_1 x + r_2 x^2 + \cdots + r_d x^d
\end{align*}
whenever 
\begin{align*}
    \Phi[q_0+q_1 x^2 + q_2 x^4 + \cdots + q_d x^{2d}] = r_0 + r_1 x^2 + r_2 x^4 + \cdots + r_d x^{2d}.
\end{align*}
It is easy to verify that $\Phi^{\pi} \colon \cP \to \cP$ is 
support-preserving and multiplicative if $\Phi$ is. By Proposition~\ref{prop:extension}, $\Phi^{\pi}$ admits a unique extension to a degree-preserving multiplicative map $\cM \to \cM$. 

\begin{claim}\label{cl:invariance_small_interval}
Suppose that $\Phi$ is support-preserving and multiplicative, and $\Phi[x+1]=x+1$. Then for all $a \in [-1,1]$ it holds that $\Phi[q^a]=q^a$.
\end{claim}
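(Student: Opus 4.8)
The plan is to leverage Claim~\ref{cl:invariance_small_interval0}, which already gives $\Phi[q^a] = q^a$ for $a \in [-1,1]\setminus\{0\}$, together with the auxiliary map $\Phi^\pi$ introduced just above. The only gap is the value $a = 0$, i.e.\ showing $\Phi[x^2+1] = x^2+1$. The key observation is that $x^2+1 = \pi[x+1]$ in the notation above, so the quadratic $x^2+1$ for $\Phi$ corresponds to the \emph{linear} polynomial $x+1$ for $\Phi^\pi$. Since $\Phi^\pi$ is support-preserving and multiplicative (and extends to $\cM(\N)$ by Proposition~\ref{prop:extension}), Proposition~\ref{prop:binomials} applies to $\Phi^\pi$: there is a constant $\delta > 0$ with $\Phi^\pi[x+t] = x + \delta t$ for all $t > 0$. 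Translating back through $\pi$, this says $\Phi[x^2 + t] = x^2 + \delta t$ for all $t>0$; in particular $\Phi[x^2+1] = x^2 + \delta$, so it remains only to pin down $\delta = 1$.

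To identify $\delta$, the idea is to compare the action of $\Phi$ on a polynomial admitting two different factorizations that mix "even" factors (controlled by $\Phi^\pi$) with ordinary linear factors (controlled, via $\Phi[x+1]=x+1$, by Proposition~\ref{prop:binomials} applied to $\Phi$ itself, which forces $\gamma = 1$, hence $\Phi[x+a] = x+a$ for all $a>0$). The cleanest route: note $(x^2+1)(x+1) = x^3 + x^2 + x + 1$ is in $\cP(\N)$, so
\begin{align*}
    \Phi[(x^2+1)(x+1)] = \Phi[x^2+1]\cdot\Phi[x+1] = (x^2+\delta)(x+1) = x^3 + x^2 + \delta x + \delta.
\end{align*}
By the support-preserving property the coefficients of $x^3+x^2+x+1$ and of $x^3+x^2+\delta x+\delta$ have the same support, which is automatic here and unfortunately does not force $\delta=1$. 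So instead I would use a factorization whose two sides genuinely constrain the constant: for $a\in(0,1)$ we already know $\Phi[q^a]=q^a=x^2-ax+1$, and also $\Phi[x^2+1]=x^2+\delta$. Consider $q^a(x)\cdot(x^2+1) = (x^2-ax+1)(x^2+1) = x^4 - a x^3 + 2x^2 - a x + 1$, which lies in $\cM(\N)$ but has a negative coefficient, so this particular product does not test support-preservation directly. The honest approach is to instead find a genuinely non-negative polynomial — e.g.\ $(x^2+1)^2\cdot q^a$-type expressions, or products of the form $(x^2+1)(x^2+c)\cdots$ against linear factors — exhibiting a second factorization into pieces on which $\Phi$ is already known, and read off a coefficient identity that is violated unless $\delta=1$.

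I expect the main obstacle to be exactly this last step: constructing the right polynomial identity that rigidly forces $\delta = 1$. A promising candidate is to exploit that $x^4+1$ has the two factorizations one would want — it is irreducible over $\Q$ with non-negative coefficients, and $x^4+1 = \pi[x^2+1]$, so $\Phi[x^4+1]=x^4+\delta$ by the reasoning above applied once more; meanwhile $x^4+1$ relates to $x^2+1$ and quadratics with complex roots through identities like $(x^2+1)^2 = x^4 + 2x^2 + 1$ and $x^4+1 = (x^2 - \sqrt{2}x+1)(x^2+\sqrt 2 x + 1)$, the latter factors being $q^{\pm\sqrt2}$, which however fall outside $[-1,1]$. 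A cleaner fix: apply the whole argument with $\Phi^\pi$ in place of $\Phi$ to deduce $\Phi^\pi[x^2+1] = x^2 + \delta'$ and chase the relation between $\delta$, $\delta'$, and $1$ through a product such as $(x^2+1)(x+1)^2 = x^4 + 2x^3 + 2x^2 + \cdots$; comparing the $x$-coefficient on both sides of $\Phi[(x^2+1)(x+1)^2] = (x^2+\delta)(x+1)^2$ against the support-preserving constraint (all coefficients strictly positive, which they are), one still needs one more equation. Ultimately I would settle this by using a product of $x^2+1$ with a carefully chosen quadratic $q^a$, $a\in(0,1)$, times a power of $(x+1)$, chosen (via Lemma~\ref{lem:polya}-type reasoning) so the product lands in $\cP(\N)$ with a coefficient that vanishes exactly when $\delta=1$, mirroring the structure of the $\varphi(2b)=2\varphi(b)$ computation in the proof of Proposition~\ref{prop:binomials}.
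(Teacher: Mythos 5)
The reduction in your first paragraph is sound and matches the paper's starting point: by Claim~\ref{cl:invariance_small_interval0} only $a=0$ is missing, $\Phi^\pi$ is support-preserving and multiplicative, and Proposition~\ref{prop:binomials} applied to $\Phi^\pi$ gives $\Phi^\pi[x+t]=x+\delta t$, i.e.\ $\Phi[x^2+t]=x^2+\delta t$, so everything hinges on $\delta=1$. But that is precisely the step you leave unproved, and the mechanism you sketch for it cannot succeed in the form you describe. If you multiply $x^2+1$ by a cofactor $r$ that has non-negative coefficients and known image (e.g.\ $r=q^a(x)(x+1)^n\in\cP(\N)$, which is fixed by $\Phi$), then the $k$-th coefficient of $(x^2+\delta)r$ is $r_{k-2}+\delta r_k$, which vanishes if and only if $r_{k-2}=r_k=0$, independently of $\delta>0$; so $(x^2+1)r$ and $(x^2+\delta)r$ always have identical supports and no coefficient ``vanishes exactly when $\delta=1$.'' To get a non-vacuous support constraint you would need a cofactor with a negative coefficient, fixed by $\Phi$, whose product with $x^2+1$ lies in $\cP(\N)$ and has a zero produced by a cancellation $r_{k-2}=-r_k\neq 0$; you do not exhibit such a polynomial, and the natural candidates collapse (e.g.\ $(x^2-x+1)(x+1)=x^3+1$ lands back in $\cP(\N)$, so $(x^2+1)(x^3+1)=x^5+x^3+x^2+1$ constrains nothing).

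The paper instead closes the gap with an exact identity, and it needs one ingredient you never obtain: knowledge of $\Phi^\pi$ on \emph{quadratics}, not just on linear polynomials. Writing $\Phi^\pi[x+a]=x+\gamma a$, conjugate: $\Phi'=\Psi_{1/\gamma}\circ\Phi^\pi$ is support-preserving, multiplicative and fixes $x+1$, so Claim~\ref{cl:invariance_small_interval0} applies to $\Phi'$ and yields $\Phi^\pi[q^a]=x^2-\gamma a x+\gamma^2$ for $a\in[-1,1]\setminus\{0\}$. Then one uses
\begin{align*}
q^{-1}(x^2)=x^4+x^2+1=(x^2+x+1)(x^2-x+1)=q^{-1}(x)\cdot q^{1}(x),
\end{align*}
whose quadratic factors are fixed by $\Phi$ by Claim~\ref{cl:invariance_small_interval0}; computing $\Phi[x^4+x^2+1]$ once through this factorization (giving $x^4+x^2+1$) and once as $\Phi^\pi[q^{-1}](x^2)=x^4+\gamma x^2+\gamma^2$ forces $\gamma=1$, i.e.\ $\delta=1$. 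You came close when you considered $x^4+1=q^{\sqrt2}(x)\,q^{-\sqrt2}(x)$ and rightly discarded it because $\pm\sqrt2\notin[-1,1]$; the correct polynomial is $x^4+x^2+1$, whose factors are $q^{\pm1}$, exactly at the edge of the range already covered. As written, your proposal identifies the right reduction but has a genuine gap at the identification $\delta=1$, and the support-preservation route you propose for that step is not repairable without a substantially different idea.
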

\begin{proof}
The case of $a \neq 0$ was shown in Claim~\ref{cl:invariance_small_interval0}. It remains to be shown that $\Phi[x^2+1]=x^2+1$, or equivalently that $\Phi^\pi[x+1]=x+1$. Since $\Phi^\pi$ is  support-preserving and multiplicative, by Proposition~\ref{prop:binomials} there is a constant $\gamma>0$ such that $\Phi^\pi[x+a]=x+ \gamma a$. 

Define $\Phi'=\Psi_{1/\gamma}\circ \Phi^\pi$, where we recall that $\Psi_{1/\gamma}\colon p(x)\to p(\gamma x)$. Hence, $\Phi'$ is support-preserving and multiplicative and satisfies $\Phi'[x+1]=x+1$. Therefore, by Claim~\ref{cl:invariance_small_interval0}, $\Phi'[x^2-a+1]=x^2-ax+1$ for all  $a \in [-1,1] \setminus \{0\}$. Since $\Phi^\pi=\Psi_\gamma\circ \Phi'$, we get $\Phi^\pi[q^a(x)] = x^2-\gamma a x+\gamma^2$ for all  $a \in [-1,1] \setminus \{0\}$.

Let us show that $\gamma=1$. Observe that for $a=-1$
\begin{align*}
    q^{a}(x^2)= x^4+x^2+1 = (x^2+x+1)\cdot(x^2-x+1) = q^{a}(x)\cdot q^{-a}(x).
\end{align*}
Hence,
$$\Phi[q^a(x^2)]=\Phi[q^a(x)]\cdot \Phi[q^{-a}(x)]=(x^2+x+1)(x^2-x+1)=x^4+x^2+1.$$
On the other hand,
$$\Phi[q^a(x^2)]=\Phi^\pi[q^a](x^2)=x^4-\gamma a x^2+\gamma^2.$$
Thus $\gamma=1$, which completes the proof.
\end{proof}

\begin{claim}\label{cl:interval_extension}
Let $A$ be a subset of $(-2,2)$. 
Suppose that $\Phi^\pi[q^a]=q^a$ for all $a \in A$. Then $\Phi[q^a]=q^a$ for all $a$ such that $a^2-2 \in A$.
\end{claim}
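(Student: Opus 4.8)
The plan is to relate $q^a(x)$ to $q^c(x^2)$ for a suitable $c$ via the identity that appeared already in the proof of Claim~\ref{cl:invariance_small_interval}, namely $q^c(x^2) = q^a(x)\cdot q^{-a}(x)$ where the relation between $a$ and $c$ is $c = a^2 - 2$. Indeed, expanding $(x^2 - ax + 1)(x^2 + ax + 1) = x^4 + (2 - a^2)x^2 + 1$, so that $q^a(x)\cdot q^{-a}(x) = x^4 - (a^2-2)x^2 + 1 = q^{a^2-2}(x^2)$. Thus, writing $c = a^2 - 2$, the hypothesis $c \in A$ gives us control of $\Phi^\pi[q^c]$, and we want to deduce control of $\Phi[q^a]$.

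First I would apply $\Phi$ to the identity $q^c(x^2) = q^a(x)\cdot q^{-a}(x)$. Using multiplicativity of the extended $\Phi$ on $\cM(\N)$ (Proposition~\ref{prop:extension}) and the definition of $\Phi^\pi$, the left-hand side becomes $\Phi[q^c(x^2)] = \Phi^\pi[q^c](x^2) = q^c(x^2) = q^{a^2-2}(x^2) = x^4 - (a^2-2)x^2 + 1$, using the hypothesis. On the right-hand side we get $\Phi[q^a]\cdot\Phi[q^{-a}]$. Now I need to know the form of $\Phi[q^a]$ and $\Phi[q^{-a}]$. If $a \in [-1,1]$ we are already done by Claim~\ref{cl:invariance_small_interval}, so the content is in the range $1 < |a| < 2$ (for which $c = a^2 - 2 \in (-1, 2)$). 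For such $a$, one of $q^a, q^{-a}$ has a negative middle coefficient and lies in $\cM(\N)\setminus\cP(\N)$, while the other lies in $\cP(\N)$; in either case $\Phi$ (or its extension) is defined, degree-preserving, and support-preserving, so by Claim~\ref{clm:degree-1-unequal}-type reasoning and the structure of $\cM(\N)$, we may write $\Phi[q^a] = x^2 + \alpha x + \beta$ and $\Phi[q^{-a}] = x^2 - \alpha' x + \beta'$ for appropriate signs — more precisely, since $\Phi$ preserves positivity of coefficients, the signs of the linear coefficients of $\Phi[q^a]$ and $\Phi[q^{-a}]$ match those of $q^a$ and $q^{-a}$ respectively (one zero cannot occur since $a \neq 0$), and the constant terms are positive.

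Then I would multiply out $\Phi[q^a]\cdot\Phi[q^{-a}] = (x^2 + \alpha x + \beta)(x^2 - \alpha' x + \beta')$ and match coefficients against $x^4 - (a^2 - 2)x^2 + 1$. The vanishing of the $x^3$ and $x$ coefficients gives two linear relations among $\alpha, \alpha', \beta, \beta'$; combined with the $x^0$ coefficient $\beta\beta' = 1$ and the $x^2$ coefficient $\beta + \beta' - \alpha\alpha' = -(a^2-2)$, I expect to solve and force $\beta = \beta' = 1$ and $\alpha = \alpha' = a$ (up to the sign bookkeeping), whence $\Phi[q^a] = q^a$. The main obstacle I anticipate is the sign bookkeeping and ruling out spurious solutions of the coefficient-matching system: a priori the system $\beta + \beta' - \alpha\alpha' = -(a^2-2)$, $\beta\beta' = 1$, together with the two linear relations from the odd coefficients, could admit a solution other than the symmetric one, so I will need to use the support-preserving constraints (all of $\beta, \beta', \alpha, \alpha' > 0$ when the corresponding original coefficients are nonzero, and the correct signs otherwise) to pin down uniqueness. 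A clean way to organize this is to observe that $\Phi[q^a]\cdot\Phi[q^{-a}]$ is a degree-$4$ polynomial in $\cM(\N)$ with no odd-degree terms, hence is of the form $g(x^2)$ for a quadratic $g \in \cM(\N)$; matching $g(x^2) = q^{a^2-2}(x^2)$ gives $g = q^{a^2-2}$ directly, and then the factorization $q^{a^2-2}(x^2) = \Phi[q^a]\cdot\Phi[q^{-a}]$ into two real quadratics, together with uniqueness of factorization in $\cM(\N)$ and the support constraints, identifies the factors as $q^a$ and $q^{-a}$.
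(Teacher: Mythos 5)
Your final paragraph is precisely the paper's proof: apply $\Phi$ to $q^{a^2-2}(x^2)=q^a(x)\,q^{-a}(x)$, use the hypothesis through $\Phi^\pi$, and then use that $x^4+(2-a^2)x^2+1$ has two pairs of complex-conjugate roots, so its factorization into monic real quadratics is unique, with the support-preserving constraint on the factor $\Phi[q^{-a}]\in\cP(\N)$ (for $a\geq 0$) singling out which factor is which; this is correct and essentially identical to the paper's argument. Two small trims: the reduction to $1<|a|<2$ via Claim~\ref{cl:invariance_small_interval} silently uses $\Phi[x+1]=x+1$, which is not a hypothesis of the present claim, but it is also unnecessary since your factorization argument works verbatim for every $a$ with $a^2-2\in A$; likewise, the assertion that the sign of the linear coefficient of $\Phi[q^a]$ matches that of $q^a$ when $q^a\notin\cP(\N)$ is not justified by support-preservation (which constrains $\Phi$ only on $\cP(\N)$), but your unique-factorization argument never needs it.
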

\begin{proof}
Note that 
\begin{align*}
    q^a(x) \cdot q^{-a}(x) = (x^2-a x+ 1)\cdot(x^2+a x+ 1) = x^4 + (2-a^2)x^2+1.
\end{align*}
By the claim hypothesis
$\Phi^\pi[x^2+(2-a^2)x+1] = x^2+(2-a^2)x+1$. It follows that
$$\Phi[q^a]\cdot \Phi[q^{-a}]=x^4+(2-a^2)x^2+1. $$
Without loss of generality, we can assume that $a\geq 0$. By Proposition~\ref{prop:extension}, the left-hand side is the product of two quadratic polynomials; moreover,  $\Phi[q^{-a}]$ has non-negative coefficients.
The polynomial $x^4+(2-a^2)x^2+1$ on the right-hand side has two pairs of complex-conjugate roots. Hence, there is a unique way to represent it as a product of two quadratic monic polynomials with real coefficients:
$$x^4+(2-a^2)x^2+1=(x^2-a x+ 1)\cdot(x^2+a x+ 1).$$
Only one of these quadratic factors has non-negative coefficients and thus
$$\Phi[q^{a}]=x^2-a x+ 1\qquad\mbox{and}\qquad \Phi[q^{-a}]=x^2+a x+ 1$$
completing the proof.
\end{proof}
\begin{claim}
If a support-preserving multiplicative $\Phi$ satisfies $\Phi[x+1]=x+1$, then $\Phi[q^a]=q^a$ for all $a$ in $(-2,2)$.
\end{claim}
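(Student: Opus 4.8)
The plan is to bootstrap from Claim~\ref{cl:invariance_small_interval}, which already gives $\Phi[q^a]=q^a$ for $a\in[-1,1]$, using the iteration map $a \mapsto a^2-2$ together with Claim~\ref{cl:interval_extension}. The key algebraic identity is that $q^a(x)\cdot q^{-a}(x) = x^4+(2-a^2)x^2+1 = (q^{-(a^2-2)})(x^2)$, so that knowing $\Phi$ (equivalently $\Phi^\pi$) on a quadratic $q^c$ with $c=2-a^2$ pins down $\Phi$ on $q^a$ and $q^{-a}$. Since $\Phi^\pi$ is itself a support-preserving multiplicative map satisfying $\Phi^\pi[x+1]=x+1$ (this was established, with $\gamma=1$, inside the proof of Claim~\ref{cl:invariance_small_interval}), any statement we prove about $\Phi$ in the form ``$\Phi[q^a]=q^a$ for $a\in I$'' applies verbatim to $\Phi^\pi$ as well. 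Thus the problem is really a dynamical one: starting from the interval $[-1,1]$ on which we have invariance, which $a$ can we reach?

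First I would set up the induction precisely. Suppose we know $\Phi[q^c]=q^c$ (hence $\Phi^\pi[q^c]=q^c$) for all $c$ in some symmetric interval $I\supseteq[-1,1]$ with $I\subseteq(-2,2)$. Claim~\ref{cl:interval_extension} applied with $A=I$ then yields $\Phi[q^a]=q^a$ for every $a$ with $a^2-2\in I$, i.e.\ for every $a$ with $a^2\in I+2$. Since $I+2$ contains an interval around $2$ (because $I\ni$ some neighborhood of $0$ — indeed $[-1,1]\subseteq I$ gives $[1,3]\subseteq I+2$), the set $\{a : a^2-2\in I\}$ is a symmetric interval strictly larger than $I$, provided $I\ne(-2,2)$: explicitly if $I=[-s,s]$ with $1\le s<2$ then the new interval is $[-\sqrt{s+2},\sqrt{s+2}]$, and $\sqrt{s+2}>s$ exactly when $s<2$. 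So one step of the iteration enlarges $[-s,s]$ to $[-\sqrt{s+2},\sqrt{s+2}]$.

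Next I would run this iteration. Define $s_0=1$ and $s_{n+1}=\sqrt{s_n+2}$; this is the standard recursion with fixed point $2$, and $s_n\uparrow 2$ monotonically. By induction on $n$, using Claim~\ref{cl:interval_extension} at each step (and the fact that $\Phi^\pi$ inherits the hypotheses so the induction hypothesis ``$\Phi^\pi[q^c]=q^c$ for $c\in[-s_n,s_n]$'' is available), we get $\Phi[q^a]=q^a$ for all $a\in[-s_n,s_n]$, for every $n$. Since $\bigcup_n[-s_n,s_n]=(-2,2)$, this gives $\Phi[q^a]=q^a$ for all $a\in(-2,2)$, which is the claim. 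One detail to be careful about: each application of Claim~\ref{cl:interval_extension} needs its hypothesis set $A$ to lie inside $(-2,2)$, which holds since $s_n<2$ always, and the conclusion lands on $a$ with $a^2-2\in[-s_n,s_n]$, i.e.\ $a^2<s_n+2<4$, so $|a|<2$ and we stay inside $\cM(\N)$ throughout.

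The main obstacle — really the only subtle point — is making sure the self-referential use of $\Phi^\pi$ is legitimate at every stage: Claim~\ref{cl:interval_extension} is phrased in terms of hypotheses on $\Phi^\pi$ and conclusions about $\Phi$, so to iterate I must know that the conclusion ``$\Phi[q^a]=q^a$ for $a\in[-s_{n+1},s_{n+1}]$'' also holds with $\Phi$ replaced by $\Phi^\pi$. This is true because $\Phi^\pi$ is a support-preserving multiplicative map with $\Phi^\pi[x+1]=x+1$ (the $\gamma=1$ computation in Claim~\ref{cl:invariance_small_interval}), so whatever we prove about such maps in general — in particular the statement of the present claim, proved by the same induction — applies to it; the cleanest way to package this is to prove the claim for \emph{all} support-preserving multiplicative $\Phi$ fixing $x+1$ simultaneously, so that the inductive hypothesis is automatically available for $\Phi^\pi$ at each step. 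Everything else is the routine quadratic bookkeeping already carried out in Claims~\ref{cl:interval_extension} and~\ref{cl:invariance_small_interval}.
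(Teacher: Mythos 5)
Your argument is essentially the paper's. The paper also reduces the claim to the dynamics of $f(a)=a^2-2$: it defines the set $A^*$ of those $a\in(-2,2)$ such that \emph{every} support-preserving multiplicative map fixing $x+1$ fixes $q^a$ --- exactly your device of proving the statement for all such maps simultaneously, so that Claim~\ref{cl:interval_extension} can be fed the conclusion for $\Phi^\pi$ --- and then shows that every $a\in(-2,2)\setminus[-1,1]$ lands in $[-1,1]$ after finitely many forward iterations of $f$, whereas you run the inverse iteration $s_{n+1}=\sqrt{s_n+2}$ with $s_n\uparrow 2$; these are the same proof. One small correction to your inductive step: for $I=[-s,s]$ with $1\le s<2$, the set $\{a : a^2-2\in I\}=\{a:\sqrt{2-s}\le |a|\le\sqrt{2+s}\}$ is an annulus, not the interval $[-\sqrt{s+2},\sqrt{s+2}]$ you assert (for $s=1$ it misses a neighborhood of $0$). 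The induction still closes, because this set united with the interval $[-s_n,s_n]$ already covered by the inductive hypothesis equals $[-\sqrt{s_n+2},\sqrt{s_n+2}]$, using that $\sqrt{2-s}\le s$ exactly when $s\ge 1$; but that union and the inequality $s_n\ge 1$ should be stated explicitly rather than claiming the preimage alone is an interval.
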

\begin{proof}
Let $A^*$ be the set of all $a \in (-2,2)$ such that $\Phi'[q^a]=q^a$ for all support-preserving multiplicative maps $\Phi'$ such that $\Phi'[x+1]=x+1$. By Claim~\ref{cl:invariance_small_interval}, $[-1,1]\subseteq A^*$.

Let $f(x) = x^2-2$, and denote by $f^{(n)}$ the $n$-fold composition of $f$ with itself. We claim that for any $x \in (1,2)$ there is a number $n$ such that $f^{(n)}(x) \in [-1,1]$. Since the image of $(1,2)$ under $f$ is $(-1,2)$, it is enough to show that there is no $x_0\in (1,2)$ such that $x_n=f^{(n)}(x_0)$ stays in $(1,2)$ for all $n$. Towards a contradiction, suppose that such $x_0$ exists. For $x \in (1,2)$, we have that $f(x) < 3x-4$, since $f(1)=-1$, $f(2)=2$ and $f$ is strictly convex. In particular, $f(x) -x < 2x-4 < 0$ for all $x \in [1,2)$, so that $f(x)<x$. Thus the sequence $x_n$ is decreasing. Denote $x_\infty=\lim_ n x_n\in [1,2)$. By continuity of $f$, we get $f(x_\infty)=x_\infty$. But $f(x)<x$ for all $x \in [1,2)$. This contradiction implies that for any $x\in (1,2)$, there is $n$ such that $f^{(n)}(x)\in (-1,1)$. The same argument applies to $x \in (-2,-1)$, since $f(-x)=f(x)$.

By Claim~\ref{cl:interval_extension}, 
if $f(a) \in A^*$ then $a \in A^*$. It follows that if $f^{(n)}(a) \in A^*$, then $a \in A^*$. Since $[-1,1] \subseteq A^*$, we conclude that $(-2,2) \subseteq A^*$. Thus $A^* = (-2,2)$.

\end{proof}

We are now ready to show that if $\Phi[x+1]=x+1$, then $\Phi[q]=q$ for all quadratic $q \in \cM$ that have no real roots. Note that any such $q$ is (up to normalization) of the form $q(x) = x^2-a\gamma x+\gamma^2$ for some $a \in (-2,2)$ and   $\gamma>0$.
\begin{proposition}
\label{cl:all_quadratic_without_real_roots}
If a support-preserving multiplicative $\Phi$ satisfies $\Phi[x+1]=x+1$, then
$\Phi[x^2-a\gamma x+\gamma^2]=x^2-a\gamma x+\gamma^2$ for all $a \in (-2,2)$ and all  $\gamma>0$.
\end{proposition}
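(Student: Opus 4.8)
The plan is to deduce the general statement from the already-proved special case $\gamma=1$ (the preceding claim) by conjugating $\Phi$ with the scaling map $\Psi_\gamma$.

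Fix $\gamma>0$ and set $\widetilde\Phi:=\Psi_{1/\gamma}\circ\Phi\circ\Psi_\gamma$. Since $\Psi_\gamma$ and $\Psi_{1/\gamma}$ each map $\cP(\N)$ into $\cP(\N)$ and are support-preserving, multiplicative and degree-preserving, the same holds for $\widetilde\Phi$ on $\cP(\N)$. Viewing $\Phi$ and the two scalings also as maps on $\cM(\N)$ (using Proposition~\ref{prop:extension} for $\Phi$), the composition $\Psi_{1/\gamma}\circ\Phi\circ\Psi_\gamma$ is a degree-preserving multiplicative map $\cM(\N)\to\cM(\N)$ agreeing with $\widetilde\Phi$ on $\cP(\N)$; by the uniqueness clause of Proposition~\ref{prop:extension}, it is exactly the canonical extension of $\widetilde\Phi$. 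I would then check that $\widetilde\Phi$ fixes $x+1$: by Proposition~\ref{prop:binomials} together with the hypothesis $\Phi[x+1]=x+1$ we have $\Phi[x+t]=x+t$ for all $t>0$, so in monic normalization $\widetilde\Phi[x+1]=\Psi_{1/\gamma}[\Phi[\Psi_\gamma[x+1]]]=\Psi_{1/\gamma}[\Phi[x+\gamma]]=\Psi_{1/\gamma}[x+\gamma]=x+1$.

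Now apply the preceding claim to $\widetilde\Phi$: since $\widetilde\Phi$ is support-preserving and multiplicative with $\widetilde\Phi[x+1]=x+1$, we get $\widetilde\Phi[q^a]=q^a$ for all $a\in(-2,2)$, where $q^a(x)=x^2-ax+1$. Unwinding the conjugation, in monic normalization $\Psi_\gamma[q^a](x)=\gamma^2 q^a(x/\gamma)=x^2-a\gamma x+\gamma^2$, so the identity $\widetilde\Phi[q^a]=q^a$ reads $\Psi_{1/\gamma}\big[\Phi[x^2-a\gamma x+\gamma^2]\big]=x^2-ax+1$. Applying $\Psi_\gamma$ to both sides, and using $\Psi_\gamma\circ\Psi_{1/\gamma}=\mathrm{id}$ together with $\Psi_\gamma[x^2-ax+1]=x^2-a\gamma x+\gamma^2$, gives $\Phi[x^2-a\gamma x+\gamma^2]=x^2-a\gamma x+\gamma^2$ for all $a\in(-2,2)$ and all $\gamma>0$. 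Since every quadratic in $\cM(\N)$ without real roots is, up to normalization, of this form, the proposition follows.

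The main obstacle is purely organizational: one must track the monic normalization under which $\Psi_\gamma$ acts as $p(x)\mapsto\gamma^{\deg p}p(x/\gamma)$, and one must justify that the extension of the conjugated map $\widetilde\Phi$ furnished by Proposition~\ref{prop:extension} coincides with the conjugate $\Psi_{1/\gamma}\circ\Phi\circ\Psi_\gamma$ of the extension of $\Phi$ — which is precisely where the uniqueness of the degree-preserving multiplicative extension is used. No new estimate or construction is required.
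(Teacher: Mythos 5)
Your proposal is correct and follows essentially the same route as the paper: conjugate $\Phi$ by $\Psi_\gamma$, observe the conjugate is support-preserving, multiplicative and fixes $x+1$, invoke the $\gamma=1$ claim, and undo the conjugation. Your extra care in checking $\widetilde\Phi[x+1]=x+1$ via Proposition~\ref{prop:binomials} and in identifying the conjugated extension with the canonical one via uniqueness in Proposition~\ref{prop:extension} only makes explicit steps the paper leaves implicit.
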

\begin{proof}
Recall that $\Psi_\gamma$ maps $p(x)$ to $p(x/\gamma)$. Consider $\Phi'=\Psi_{1/\gamma}\circ\Phi\circ\Psi_\gamma $. Then $\Phi'$ is support-preserving and multiplicative, and $\Phi'[x+1]=x+1$. By Claim~\ref{cl:interval_extension},
$\Phi'[q^a]=q^a$ for all $a \in (-2,2)$. Equivalently, $\Psi_{1/\gamma}\circ  \Phi\circ \Psi_\gamma [q_a]=q_a$. Applying $\Psi_\gamma$ on both sides of this identity, we get $\Phi\big[\Psi_\gamma [q_a]\big]=\Psi_\gamma[q_a]$. Since $\Psi_\gamma[q_a](x)=x^2-a\gamma x+\gamma^2$, the proof is complete.
\end{proof}

\subsection{Proofs of Theorems~\ref{thm:main} and~\ref{thm:main-poly}}

\begin{proof}[Proof of Theorem~\ref{thm:main-poly}]

We first claim that to prove the theorem, it suffices to show that a support-preserving and multiplicative $\Phi$ such $\Phi[x+1]=x+1$ is the identity map. To see that this statement implies the theorem, let $\Phi$ be support-preserving and multiplicative. Define $\gamma>0$ by $\Phi[x+1]=x+\gamma$. Recall from \eqref{eq:psi} that $\Psi_\gamma \colon \cP \to \cP$ is the map that takes $p(x)$ to $p(x / \gamma)/p(1/\gamma)$. Hence $\Phi'= \Psi_\gamma^{-1} \circ \Phi$ is support-preserving and multiplicative, and  furthermore satisfies $\Phi'[x+1]=x+1$. Hence, if we show that $\Phi'$ is the identity map, then we have shown that $\Phi = \Psi_\gamma$.

We now show that a support-preserving and multiplicative $\Phi \colon \cP \to \cP$ such that $\Phi[x+1]=x+1$ is the identity map.

Fix $p \in \cP$. By the fundamental theorem of algebra, we can write it as a product of polynomials
\begin{align*}
    p(x) = \prod_i r^i(x)\prod_j q^j(x),
\end{align*}
where each $r^i$ is linear and each $q^j$ is quadratic with no real roots. 

Since $p \in \cP$ has no positive roots, each linear term $r^i$ is of the form $r^i(x)=x+b_i$ for some $b_i \geq 0$. Since each quadratic term has no real roots, it is of the form $q^j(x) = x^2-a_j\gamma_j x+\gamma_j^2$ for some $\gamma_j >0$ and $a_j \in (-2,-2)$.

By Proposition~\ref{prop:extension}, we can extend $\Phi$ to a multiplicative map $\Phi \colon \cM \to \cM$. Hence,
\begin{align*}
    \Phi[p] = \prod_i \Phi[r^i]\prod_j\Phi[q^j].
\end{align*}
By Proposition~\ref{prop:binomials}, $\Phi[r^i]=r^i$. And by Proposition~\ref{cl:all_quadratic_without_real_roots}, $\Phi[q^j] = q^j$. We conclude that $\Phi[p]=p$, and so $\Phi$ is the identity map. By the remark at the beginning of the proof, this implies the theorem statement.
\end{proof}

\begin{proof}[Proof of Theorem~\ref{thm:main}]
The case $S=\Z_{\geq 0}$ follows immediately from Theorem~\ref{thm:main-poly} by translating from probability-generating functions back to probability measures.

Consider now $S=\Z$, and let $\Phi \colon \cPr(\Z) \to \cPr(\Z)$ be a support-preserving endomorphism. Then its restriction to $\cPr(\Z_{\geq 0})$ is equal to some $\Phi_\beta$. Given $\mu \in \cPr(\Z)$, there is some $z \in \Z$ such that $\mu * \delta_z \in \cPr(\Z_{\geq 0})$. Hence
\begin{align*}
    \Phi[\mu*\delta_z] = \Phi_\beta[\mu * \delta_z] = \Phi_\beta[\mu] * \Phi_\beta[\delta_z] = \Phi_\beta[\mu] * \delta_z,
\end{align*}
since $\Phi_\beta$ is a support-preserving endomorphism. On the other hand, 
\begin{align*}
    \Phi[\mu*\delta_z] = \Phi[\mu] * \Phi[\delta_z] = \Phi[\mu] * \delta_z,
\end{align*}
since $\Phi$ is a support-preserving endomorphism. Hence
\begin{align*}
    \Phi[\mu] * \delta_z = \Phi_\beta[\mu] * \delta_z
\end{align*}
and so $\Phi[\mu] = \Phi_\beta[\mu]$.

Finally, consider the case $S=\Q$, and let $\Phi\colon \cPr(\Q) \to \cPr(\Q)$ be a support-preserving endomorphism. For each $n \in \Z_{> 0}$, the semi-group $\cPr(\Z/n)$ is isomorphic to $\cPr(\Z)$, and thus there is some $\beta_n$ such that the restriction of $\Phi$ to $\cPr(\Z/n)$ is equal to $\Phi_{\beta_n}$. But since $\Z/n$ and $\Z/m$ are both contained in $\Z/(nm)$, $\beta_n=\beta_m=\beta$. Finally, $\cPr(\Q) = \cup_n \cPr(\Z/n)$, and so $\Phi=\Phi_\beta$.
\end{proof}

\subsection{Proof of Theorem~\ref{thm:main-poly-rational}}

The proof of Theorem~\ref{thm:main-poly-rational} initially follows the argument of the proof of Theorem~\ref{thm:main-poly}. We first analogously extend $\Phi$ to a map from $\cM_\Q$ to $\cM$ where
\begin{align*}
    \cM_\Q = \left\{p(x) = \sum_{k=1}^d p_k x^k\,\middle\vert\, p_k\in \Q,  \  \ p(x)>0  \text{ for all } x>0,\ \ p(1)=1\right\}.
\end{align*}
The same argument as in the proof of Proposition~\ref{prop:extension} shows that there exists a multiplicative extension that preserves the degree. Note that instead of Poincar{\'e}'s Lemma (Lemma~\ref{lem:rs}), one can use P\'olya's Lemma (Lemma~\ref{lem:polya}) to ensure that for every $p \in \cM_\Q$ there exists an $r \in \cP_\Q$ such that $p \cdot r \in \cP_\Q$.

As in the proof of Theorem~\ref{thm:main-poly}, we first consider linear polynomials with rational coefficients and note that the same argument of Proposition~\ref{prop:binomials} shows that the analogous statement holds in the rational setting.
\begin{proposition}
\label{prop:binomials-rational}
For any support-preserving multiplicative $\Phi \colon \cP_\Q \to \cP$,  there exists a constant $\gamma > 0$ such that $\Phi[x+a] = x+\gamma a$ for all $a \in \Q_{>0}$.
\end{proposition}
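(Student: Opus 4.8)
The plan is to run the proof of Proposition~\ref{prop:binomials} verbatim, the only new obligation being to check that every auxiliary polynomial we feed to $\Phi$ has rational coefficients and so lies in $\cP_\Q(\N)$. As already noted in the excerpt, $\Phi$ extends uniquely to a degree-preserving multiplicative map $\cM_\Q(\N)\to\cM(\N)$ by repeating the argument of Proposition~\ref{prop:extension}, using P\'olya's Lemma instead of Poincar\'e's to produce rational cofactors; the only ingredient that extension needs is Claim~\ref{clm:degree-1-unequal}, whose proof only ever evaluates $\Phi$ on $q^{a,b}$, $q^{a,b}(x)(x+a)$ and $q^{a,b}(x)(x+b)$, all of which have rational coefficients when $a,b\in\Q$. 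Since the extended $\Phi$ is degree-preserving, for each $t\in\Q_{>0}$ we may write $\Phi[x+t]=x+\varphi(t)$, defining $\varphi\colon\Q_{>0}\to\R_{>0}$. (Note $\varphi$ is only real-valued a priori, because the codomain of $\Phi$ is $\cP(\N)$, not $\cP_\Q(\N)$; this causes no difficulty.)

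Next, fix rational $0<a<b$, so that $q^{a,b}(x)=x^2-ax+ab\in\cM_\Q(\N)$ and $q^{a,b}(x)(x+t)=x^3+(t-a)x^2+a(b-t)x+abt\in\cP_\Q(\N)$ for every rational $t\in[a,b]$. Substituting $t=a$ and $t=b$ into the identity $\Phi[q^{a,b}]=\Phi[q^{a,b}(x)(x+t)]/(x+\varphi(t))$ and factoring out the known linear factor, exactly as in \eqref{eq:Tqa}--\eqref{eq:easy-quad}, gives $\Phi[q^{a,b}]=x^2-\varphi(a)x+\varphi(a)\varphi(b)$. Choosing any rational $t\in(a,b)$ and reading off the coefficient of $x^2$ in $(x^2-\varphi(a)x+\varphi(a)\varphi(b))(x+\varphi(t))$, the support-preserving property forces $\varphi(t)>\varphi(a)$; letting $a<t$ vary with a rational $b>t$ available, this yields the analogue of Claim~\ref{clm:monotone}: $\varphi$ is strictly monotone increasing on $\Q_{>0}$.

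Then, as in the proof of Proposition~\ref{prop:binomials}, for rational $0<a<b$ the polynomials $q^{a,b}(x)(x+2b)^2$ and $q^{a,b}(x)(x+3b)^3$ have rational coefficients and lie in $\cP_\Q(\N)$; expanding $\Phi[q^{a,b}]\cdot\Phi[(x+2b)^2]$ and $\Phi[q^{a,b}]\cdot\Phi[(x+3b)^3]$ and using that the coefficient of $x$ must vanish gives $\varphi(2b)=2\varphi(b)$ and $\varphi(3b)=3\varphi(b)$. Since every $b\in\Q_{>0}$ admits a rational $a$ with $0<a<b$, these hold for all $b\in\Q_{>0}$. Finally, $\varphi$ is a strictly increasing solution on $\Q_{>0}$ of $\varphi(2x)=2\varphi(x)$, $\varphi(3x)=3\varphi(x)$; the proof of Lemma~\ref{lem:func} applies verbatim with $\Q_{>0}$ in place of $\R_{>0}$, since the dense set $X=\{2^m3^n:m,n\in\Z\}$ it uses already consists of positive rationals and squeezing a rational $y$ between elements of $X$ stays in $\Q_{>0}$. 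Hence $\varphi(b)=\gamma b$ with $\gamma=\varphi(1)>0$, i.e.\ $\Phi[x+a]=x+\gamma a$ for all $a\in\Q_{>0}$.

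There is no genuine obstacle beyond this bookkeeping: the entire argument of Proposition~\ref{prop:binomials} is ``local'', in that it only applies $\Phi$ to polynomials whose parameters are drawn from the coefficients already in play, so restricting those parameters to $\Q$ keeps us inside $\cP_\Q(\N)$ throughout. The one point worth flagging is that $\varphi$ now lives on $\Q_{>0}$ rather than $\R_{>0}$, which is exactly why we need the (trivial) rational refinement of Lemma~\ref{lem:func}; the real extra difficulty of the rational setting---the proliferation of irreducible quadratics---does not appear here and is instead handled later through Proposition~\ref{prop:auto-cont}.
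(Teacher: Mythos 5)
Your proposal is correct and is essentially the paper's own proof: the paper simply observes that the argument of Proposition~\ref{prop:binomials} goes through verbatim in the rational setting, with P\'olya's Lemma supplying rational cofactors for the extension to $\cM_\Q(\N)$, exactly as you argue. Your only added content---checking that all auxiliary polynomials ($q^{a,b}$, the cubics, $q^{a,b}(x)(x+2b)^2$, $q^{a,b}(x)(x+3b)^3$) stay rational and that Lemma~\ref{lem:func} works for a strictly increasing $\varphi$ on $\Q_{>0}$ via the same dense set $\{2^m3^n\}$---is precisely the bookkeeping the paper leaves implicit, and it is carried out correctly.
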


We next study quadratic polynomials with rational coefficients. The argument of  Proposition~\ref{cl:all_quadratic_without_real_roots} still applies.
\begin{proposition}
\label{cl:all_quadratic_without_real_roots_rational}
Let  $\Phi \colon \cP_\Q \to \cP$ be support-preserving and multiplicative, and $\Phi[x+1]=x+1$. Then its extension to $\cM_\Q$ satisfies $\Phi[p]=p$ for any polynomial $p$ of the form $p(x)=x^2-a\gamma x+\gamma^2$ with $a \in (-2,2)\cap \Q$ and  $\gamma \in \Q_{>0}$.
\end{proposition}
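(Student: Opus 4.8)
The plan is to replay, step by step, the chain of results leading to Proposition~\ref{cl:all_quadratic_without_real_roots}, inserting the restriction to rational coefficients everywhere and checking that nothing escapes $\Q$. The enabling observation is that every polynomial written down explicitly in those arguments---the cubics $q^{a,b}(x)(x+t)$, the identity $(x^2-x+1)(x+1)=x^3+1$, the quartics $q^a(x)q^{-a}(x)=x^4+(2-a^2)x^2+1$ and $q^{-1}(x^2)=x^4+x^2+1=q^{-1}(x)q^1(x)$, and so on---has rational coefficients as soon as its parameters $a,b,\gamma$ are rational; moreover, exactly as in the real case, the support-preserving property pins down every non-leading coefficient of the corresponding output $\Phi[\cdot]$ to a prescribed value, so those outputs are automatically rational too. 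Using the extension of $\Phi$ to $\cM_\Q(\N)$ (the same argument as Proposition~\ref{prop:extension}, with Lemma~\ref{lem:polya} in place of Lemma~\ref{lem:rs}) together with Proposition~\ref{prop:binomials-rational}, one obtains rational analogs of Corollary~\ref{cor:easy-quad} and of Claims~\ref{cl:invariance_small_interval0} and~\ref{cl:invariance_small_interval}. Concretely, if $\Phi[x+1]=x+1$ then $\Phi[q^a]=q^a$ for every $a\in[-1,1]\cap\Q$: the case $a\in(0,1)$ follows from the rational Corollary~\ref{cor:easy-quad} with $b=1/a\in\Q$; the case $a=1$ from $(x^2-x+1)(x+1)=x^3+1$; the case $a\in[-1,0)\cap\Q$ from comparing $\Phi[q^aq^{-a}]$ with $x^4+(2-a^2)x^2+1$; and the case $a=0$ from the $\Phi^\pi$-argument of Claim~\ref{cl:invariance_small_interval}, which over $\Q$ produces $\Phi^\pi[q^a]=x^2-\gamma'ax+\gamma'^2$ for $a\in([-1,1]\cap\Q)\setminus\{0\}$ and then forces $\gamma'=1$ via $a=-1$.

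Next I would prove the rational analog of Claim~\ref{cl:interval_extension}: if $\Phi^\pi[q^c]=q^c$ for all $c$ in a set $A\subseteq(-2,2)\cap\Q$, then $\Phi[q^a]=q^a$ for every $a\in\Q$ with $a^2-2\in A$. The argument is unchanged: from $\Phi^\pi[x^2+(2-a^2)x+1]=x^2+(2-a^2)x+1$ one gets $\Phi[q^a]\cdot\Phi[q^{-a}]=x^4+(2-a^2)x^2+1$, and this quartic has two pairs of complex-conjugate roots, hence a unique factorization into monic real quadratics, exactly one of which (namely $q^a$, assuming $a\ge 0$ without loss of generality) has non-negative coefficients, forcing $\Phi[q^a]=q^a$. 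Note $2-a^2\in\Q$ whenever $a\in\Q$, so the hypothesis $a^2-2\in A$ is meaningful.

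The bootstrapping argument then carries over verbatim. Let $A^*_\Q$ be the set of $a\in(-2,2)\cap\Q$ such that $\Phi'[q^a]=q^a$ for every support-preserving multiplicative $\Phi'\colon\cP_\Q(\N)\to\cP(\N)$ with $\Phi'[x+1]=x+1$. The rational Claim~\ref{cl:invariance_small_interval} gives $[-1,1]\cap\Q\subseteq A^*_\Q$; in particular every such $\Phi'$ satisfies $\Phi'[x^2+1]=x^2+1$, so $(\Phi')^\pi$ is again an admissible map, and the rational Claim~\ref{cl:interval_extension} shows $A^*_\Q$ is closed under the implication $f(a)\in A^*_\Q\Rightarrow a\in A^*_\Q$, where $f(x)=x^2-2$. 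Since $f$ has integer coefficients, the iterates of a rational point stay rational; and since, by the same strictly-convex, decreasing-sequence argument as in the real case, every $x\in(1,2)$ has some iterate $f^{(n)}(x)\in(-1,1)$, we conclude $(1,2)\cap\Q\subseteq A^*_\Q$, and likewise $(-2,-1)\cap\Q\subseteq A^*_\Q$ using $f(-x)=f(x)$. Hence $A^*_\Q=(-2,2)\cap\Q$. Finally, for arbitrary $\gamma\in\Q_{>0}$ and $a\in(-2,2)\cap\Q$, conjugate as in the proof of Proposition~\ref{cl:all_quadratic_without_real_roots}: set $\Phi'=\Psi_{1/\gamma}\circ\Phi\circ\Psi_\gamma$, which is a support-preserving multiplicative map $\cP_\Q(\N)\to\cP(\N)$ with $\Phi'[x+1]=x+1$ (here using $\Phi[x+\gamma]=x+\gamma$, from Proposition~\ref{prop:binomials-rational} and $\Phi[x+1]=x+1$); then $\Phi'[q^a]=q^a$, and applying $\Psi_\gamma$ gives $\Phi[\Psi_\gamma[q^a]]=\Psi_\gamma[q^a]$, i.e.\ $\Phi[x^2-a\gamma x+\gamma^2]=x^2-a\gamma x+\gamma^2$.

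There is no genuinely new idea here beyond the real case; the step requiring the most care is the bookkeeping of rationality---verifying that every polynomial fed to $\Phi$ lies in $\cP_\Q(\N)$ (or in $\cM_\Q(\N)$ after the extension), that the support-preserving constraints keep the outputs rational, and, crucially, that the conjugating maps $\Psi_\gamma$ appearing in the $\Phi^\pi$-argument and in the final step send $\cP_\Q(\N)$ into itself. This last point is precisely why the proposition is restricted to $\gamma\in\Q_{>0}$: for irrational $\gamma$ the map $\Psi_\gamma$ does not preserve rational coefficients, so the conjugation trick would be unavailable.
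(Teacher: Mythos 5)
Your proposal is correct and is essentially the paper's own argument: the paper proves this proposition simply by remarking that the proof of Proposition~\ref{cl:all_quadratic_without_real_roots} still applies, and you have spelled out exactly the required rationality bookkeeping (rational parameters in $q^{a,b}$, $q^a$, the iterates of $f(x)=x^2-2$, and the restriction $\gamma\in\Q_{>0}$ so that the pre-composition with $\Psi_\gamma$ preserves $\cP_\Q(\N)$). One tiny quibble: in the $\Phi^\pi$-step the constant $\gamma'$ is not known to be rational, but this is harmless—there $\Psi_{1/\gamma'}$ is only post-composed with $\Phi^\pi$, so no rationality of $\gamma'$ is needed, and your execution of that step indeed never uses it.
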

Note that this does not apply to all rational quadratic polynomials, since the free coefficient is a square of a rational. 

Accordingly, let $\cM'_\Q \subset \cM_\Q$ be the set of polynomials with rational coefficients which are products of linear rational polynomials ($x+a$ for $a \in \Q_{\geq 0}$) and quadratic polynomials of the form considered in Proposition~\ref{prop:binomials-rational} ($x^2-a\gamma x+\gamma^2$ for $a \in (-2,2)\cap \Q$ and $\gamma \in \Q_{>0}$). Then the same proof of Theorem~\ref{thm:main-poly} yields the following:
\begin{proposition}
\label{prop:cpprime}
Let $\Phi$ be an extension of a support-preserving multiplicative map from $\cP_\Q$ to $\cM_\Q$. Then   there exists a constant $\gamma > 0$ such that $\Phi[p](x) = p(\gamma x)/p(\gamma)$ for any $p\in \cM'_\Q$.
\end{proposition}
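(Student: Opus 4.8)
The plan is to reproduce the proof of Theorem~\ref{thm:main-poly} with the rational-coefficient analogues in place of the real ones. The given extension $\Phi\colon\cM_\Q(\N)\to\cM(\N)$ is degree-preserving and multiplicative (it is built as in Proposition~\ref{prop:extension}, with P\'olya's Lemma (Lemma~\ref{lem:polya}) replacing Poincar\'e's (Lemma~\ref{lem:rs}), as noted above). By Proposition~\ref{prop:binomials-rational} there is a constant $\gamma>0$ with $\Phi[x+a]=x+\gamma a$ for every $a\in\Q_{>0}$; this $\gamma$ is the one in the conclusion. Put $\Phi'=\Psi_{1/\gamma}\circ\Phi$, recalling $\Psi_{1/\gamma}\colon p(x)\mapsto p(\gamma x)$, which maps $\cP(\N)$ into $\cP(\N)$. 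Then $\Phi'$ is again the (unique) extension to $\cM_\Q(\N)$ of a support-preserving multiplicative map $\cP_\Q(\N)\to\cP(\N)$, and $\Phi'[x+a]=\Psi_{1/\gamma}[x+\gamma a]\propto x+a$ for all $a\in\Q_{>0}$; in particular $\Phi'[x+1]=x+1$. It suffices to prove that $\Phi'$ restricts to the identity on $\cM'_\Q(\N)$, for then $\Phi[p]=\Psi_\gamma[p]=p(x/\gamma)/p(1/\gamma)$ for $p\in\cM'_\Q(\N)$, which is the asserted form after replacing $\gamma$ by $1/\gamma$.

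Next I would apply the two structure results established above. First, $\Phi'$ fixes every linear factor occurring in elements of $\cM'_\Q(\N)$: the factors $x+a$ with $a\in\Q_{>0}$ by the previous paragraph, and the degenerate factor $x$ because it is the unique degree-one member of $\cP(\N)$ supported on $\{1\}$ and $\Phi'$ is support- and degree-preserving. Second, by Proposition~\ref{cl:all_quadratic_without_real_roots_rational} applied to $\Phi'$ (which satisfies $\Phi'[x+1]=x+1$), the extension of $\Phi'$ fixes every quadratic $x^2-a\delta x+\delta^2$ with $a\in(-2,2)\cap\Q$ and $\delta\in\Q_{>0}$. But $\cM'_\Q(\N)$ was defined precisely as the set of products of these linear and quadratic factors, each of which lies in $\cM_\Q(\N)$ (the quadratics have discriminant $\delta^2(a^2-4)<0$, hence no real roots, and all coefficients are rational), so $\cM'_\Q(\N)\subseteq\cM_\Q(\N)$ and the multiplicativity of the extension of $\Phi'$ gives, for $p=\prod_i(x+b_i)\prod_j(x^2-a_j\delta_j x+\delta_j^2)\in\cM'_\Q(\N)$, that $\Phi'[p]=\prod_i\Phi'[x+b_i]\prod_j\Phi'[x^2-a_j\delta_j x+\delta_j^2]=p$. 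Hence $\Phi'$ is the identity on $\cM'_\Q(\N)$, and unwinding $\Phi=\Psi_\gamma\circ\Phi'$ finishes the proof.

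I do not expect a genuine obstacle: essentially all of the work sits in Propositions~\ref{prop:binomials-rational} and~\ref{cl:all_quadratic_without_real_roots_rational}, and what remains is the unique-factorization bookkeeping of $\cM'_\Q(\N)$ into admissible factors, which is the very same bookkeeping as in the proof of Theorem~\ref{thm:main-poly} and is why that proof transfers unchanged. The two points worth double-checking are the consistency of the omitted normalization constants when passing between $\Phi$, $\Phi'$, and $\Psi_\gamma$ (all of these maps are normalization-invariant in the monic bookkeeping of \S\ref{sec:prelim}), and the fact that one may conclude the statement only on the proper subset $\cM'_\Q(\N)$ rather than on all of $\cM_\Q(\N)$; bridging that remaining gap is precisely what the automatic-continuity argument (Proposition~\ref{prop:auto-cont}) will be used for afterwards.
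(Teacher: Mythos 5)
Your proposal is correct and follows essentially the same route as the paper, which simply invokes ``the same proof of Theorem~\ref{thm:main-poly}'': normalize via $\Psi$ so that $x+1$ is fixed, fix the linear factors by Proposition~\ref{prop:binomials-rational} and the admissible quadratics by Proposition~\ref{cl:all_quadratic_without_real_roots_rational}, and conclude by multiplicativity on the defining factorization of $\cM'_\Q(\N)$. Your added care about the factor $x$ and the normalization bookkeeping matches the paper's conventions, so nothing is missing.
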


Consider the topology on $\cM$ given by
$\lim_t p^{(t)} = p$ if $\lim_t \deg(p^{(t)}) = \deg(p)$ and $\lim_t p^{(t)}_k = p_k$ for all $k$. Then  $\cM'_\Q$ is a rich dense sub-semi-group of $\cM_\Q$, which is dense in $\cM$. Thus, automatic continuity (Proposition~\ref{prop:auto-cont}) together with Proposition~\ref{prop:cpprime} yield  Theorem~\ref{thm:main-poly-rational}.
\begin{proof}[Proof of Theorem~\ref{thm:main-poly-rational}]

Let $\Phi \colon \cP_\Q \to \cP$ be support-preserving and multiplicative. Define $\gamma$ by $\Phi[x+1]=x+\gamma$, and let $\Phi' = \Psi_\gamma^{-1} \circ \Phi$. Then $\Phi'$ is support-preserving, multiplicative, and satisfies $\Phi'[x+1]=x+1$.  Extend it to a multiplicative, degree-preserving $\Phi' \colon \cM_\Q \to \cM$. By Proposition~\ref{prop:cpprime}, the restriction of $\Phi'$  to $\cM'_\Q$ is of the form $\Psi_{\gamma'}$, and since $\Phi'[x+1]=x+1$, we get $\gamma'=1$. Therefore, $\Phi'$ restricted to $\cM'_\Q$ is the identity map. Hence, by Proposition~\ref{prop:auto-cont}, $\Phi'$ is the identity map on the whole domain $\cM_\Q$. Thus $\Phi = \Psi_\gamma$.
\end{proof}

\subsection{Proof of Proposition~\ref{prop:auto-cont}}

Recall that Poincar{\'e}'s Lemma (Lemma~\ref{lem:rs}) shows that for every $q \in \cM$ there is a polynomial $p \in \cP$ such that $q \cdot p \in \cP$. Towards proving Proposition~\ref{prop:auto-cont}, we investigate a related question: Given $p \in \cP$, for which $q \in \cM$ does it holds that $q \cdot p \in \cP$?

Given $p \in \cP$, denote by $S_p \subseteq \cM$ the set of polynomials $q$ such that $q \cdot p \in \cP$:
\begin{align*}
 S_p = \{q \in \cM\,:\, q \cdot p \in \cP\}.
\end{align*}
A natural question is to understand when $S_p \subseteq S_{p'}$. For example, if $p'(x) = p^2(x)$ then clearly $S_p \subseteq S_{p'}$. The next lemma shows that if we further require that $\deg(p') \leq \deg(p)$ (and $p_0 \neq 0$), then the containment $S_p \subseteq S_{p'}$ is only possible when $p=p'$.
\begin{lemma}\label{lm:cone}
Consider $p,p'\in \cP$ such that $\deg (p')\leq\deg (p)$ and  $p_0\ne 0$. 
Then $p = p'$ if and only if $S_p \subseteq S_{p'}$.
\end{lemma}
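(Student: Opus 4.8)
The plan is to prove the non-trivial direction: if $S_p \subseteq S_{p'}$ with $\deg(p') \le \deg(p)$ and $p_0 \ne 0$, then $p = p'$. The other direction is immediate. The key idea is to probe the inclusion $S_p \subseteq S_{p'}$ with carefully chosen ``reciprocal-type'' multipliers $q$ that make $q \cdot p$ barely non-negative, forcing $q \cdot p'$ to have a zero coefficient that pins down a coefficient relation between $p$ and $p'$. Concretely, write $p(x) = \sum_{k=0}^{n} p_k x^k$ with $p_0, p_n > 0$ and $n = \deg(p)$, and $p'(x) = \sum_{k=0}^{m} p'_k x^k$ with $m = \deg(p') \le n$.

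The first step is to establish that $p$ and $p'$ have the same degree and, after normalization, the same leading and trailing behavior. For the leading coefficient: by P\'olya's Lemma (Lemma~\ref{lem:polya}), $q_c(x) := (x + c)^N$ for large $N$ lies in $S_p$ after we divide by $p$'s \emph{leading} root structure---more precisely, I would use the multipliers of the form $q^{a,b}(x) = x^2 - ax + ab$ from the earlier parts of the paper, together with their products, which have the feature that $q^{a,b} \cdot (x+t)$ becomes non-negative only for $t \in [a,b]$ and has a vanishing intermediate coefficient at the endpoints. The cleanest route: note that $x^{d} p(1/x) =: \widetilde{p}(x)$ (the reversal) also lies in $\cP(\N)$, and $q \in S_p$ iff $\widetilde{q} \in S_{\widetilde p}$ in the appropriate degree-shifted sense; this symmetry lets me treat the top and bottom coefficients on equal footing. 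The main structural input is that $\cM(\N)$ is a cone and $S_p$ is exactly the preimage of the cone $\cP(\N)$ under multiplication by $p$, so the boundary of $S_p$ (where some coefficient of $q\cdot p$ vanishes) carries all the information; the inclusion $S_p \subseteq S_{p'}$ means $q\cdot p'$ is non-negative whenever $q\cdot p$ is, so at every boundary point of $S_p$, the polynomial $q \cdot p'$ lies on the boundary of $\cP(\N)$ as well, \emph{provided} $\deg$ constraints prevent $q\cdot p'$ from being ``strictly interior'' --- this is where $\deg(p')\le\deg(p)$ is used, since it forbids $p'$ from being ``larger'' than $p$.

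Next I would extract the coefficient identities. Fix an index $j$ with $0 < j < n$. I claim there is a $q \in S_p$ on the boundary of $S_p$ such that the $j$-th coefficient of $q \cdot p$ vanishes, and such that varying $q$ slightly within $S_p$ near this point lets me move only that one coefficient to first order; then the vanishing of the corresponding coefficient of $q\cdot p'$ (forced by $S_p\subseteq S_{p'}$ and a limiting/continuity argument) gives a linear relation among $p'_0,\dots,p'_m$ with coefficients built from $q$. Running this over enough boundary points of $S_p$ yields a linear system whose only solution, under $\deg(p')\le\deg(p)$ and after matching $p'_0$ with $p_0$ (using $p_0\ne 0$ and the reversal symmetry for $p_n$), is $p' = \lambda p$ for a scalar $\lambda$; then $p'(1)=p(1)=1$ forces $\lambda = 1$. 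An alternative and perhaps cleaner endgame: show directly that $p \in S_p$-boundary data determines the \emph{ratio} $p'/p$ as a rational function that must be a polynomial of degree $\le 0$, hence constant.

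The main obstacle I anticipate is the second step: producing \emph{enough} explicit boundary points $q$ of $S_p$ to span the space of coefficient functionals on $p'$, and controlling the limiting argument that transfers ``boundary of $S_p$'' to ``boundary of $S_{p'}$'' --- a priori $q \cdot p'$ could have a vanishing coefficient for reasons unrelated to $q\cdot p$, or could fail to be on the boundary at all if $S_p \subsetneq S_{p'}$ strictly. The degree hypothesis $\deg(p')\le \deg(p)$ must be leveraged precisely here to rule out $S_{p'}$ being ``strictly larger,'' likely via a counting/dimension argument on the top coefficients: if $\deg(p') < \deg(p)$ one derives an immediate contradiction by choosing $q$ so that $q\cdot p$ has positive top coefficient but the corresponding coefficient of $q\cdot p'$ is structurally zero, violating... wait, that's the wrong direction --- rather, one shows $\deg(p')<\deg(p)$ lets $S_p$ contain multipliers forcing \emph{negative} coefficients in $q\cdot p'$, contradiction. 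Pinning down exactly which family of multipliers does this cleanly is the crux, and I expect the paper uses the quadratic multipliers $q^{a,b}$ and linear multipliers $(x+t)$ already developed, exploiting that their products realize boundary points with prescribed single vanishing coefficients.
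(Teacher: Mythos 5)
There is a genuine gap: your argument never actually produces the separating multiplier, and the step you lean on is exactly the one you admit you cannot justify. The assertion that ``at every boundary point of $S_p$, the polynomial $q\cdot p'$ lies on the boundary of $\cP(\N)$ as well'' does not follow from $S_p\subseteq S_{p'}$ (inclusion of cones says nothing about boundaries mapping to boundaries, precisely because $S_{p'}$ could a priori be strictly larger), and the subsequent plan---harvest linear relations from vanishing coefficients at many boundary points, then solve a linear system to force $p'=\lambda p$---is a programme rather than a proof: you never exhibit the family of boundary multipliers, never show the resulting functionals span, and your own text flags this (``Pinning down exactly which family of multipliers does this cleanly is the crux''). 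The quadratics $q^{a,b}$ and linear factors $(x+t)$ from the earlier sections are not enough for a general $p$ of degree $n$; they were used there only to control images of linear and quadratic polynomials.

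What the paper does instead is a direct construction via linear-programming duality. First it reduces, without loss of generality, to the case where all coefficients $p_0,\dots,p_n$ are strictly positive, by replacing $p,p'$ with $p\cdot(x+1)^n,\ p'\cdot(x+1)^n$. Then it seeks a polynomial $s$ of degree $n+1$ such that $(p\cdot s)_k\ge 0$ for $k=0,\dots,n+1$ while $(p'\cdot s)_{n+1}\le -1$; this is a finite system of linear inequalities in the coefficients of $s$, and the Farkas Lemma shows it is feasible: in the dual system, positivity of all $p_k$ forces the first block of multipliers to vanish, and then $p\neq p'$ kills the remaining ones, contradicting the normalization of the dual. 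Finally, setting $q(x)=s(x)+C\,x^{n+2}$ for $C$ large makes all coefficients of $p\cdot q$ non-negative without touching the offending coefficient $(p'\cdot q)_{n+1}=(p'\cdot s)_{n+1}\le -1$, so $q\in S_p\setminus S_{p'}$. If you want to salvage your approach, this duality step is the missing engine: it replaces the need for explicit boundary points and the unproven boundary-transfer claim with a clean feasibility argument. Note also that the hypotheses $\deg(p')\le\deg(p)$ and $p_0\neq 0$ enter only to make this dual computation work (they guarantee the relevant coefficient comparisons are between genuine coefficients of $p$ and $p'$), not through any dimension count on top coefficients as you speculate.
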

\begin{proof}
Only one direction is non-trivial: proving the existence of $q \in S_p \setminus S_{p'}$ if $p\ne p'$. 

Denote $n=\deg(p)$. We claim that we can assume without loss of generality that $p_k>0$ for $k=0,\ldots, n$. Otherwise, replace $p$ and $p'$ by $p\cdot r$ and $p'\cdot r$ with $r(x)=(x+1)^n$. If we can find $q$ such that $q \cdot p \cdot r \in \cP$ but $q \cdot p' \cdot r \not \in \cP$ then $q \cdot r \in S_p \setminus S_{p'}$.

The first step is to show the existence of a polynomial $s(x) = \sum_{k=0}^{n+1}s_k x^k$ with $\deg(s)=n+1$ such that all the coefficients $(p\cdot s)_{k}$  are non-negative for $k=0,\ldots, n+1$ but $(p'\cdot s)_{n+1}\leq -1$. These requirements are equivalent to a system  of linear inequalities on coefficients of $s$:
\begin{equation}\label{eq_linear_system}
A \vec{s} \geq \vec{b},\qquad \mbox{where}
\end{equation}
$$A=\begin{pmatrix}
p_{0} &     0         &  \cdots                 &     \cdots  &   0  \\
p_{1} & p_{0}          &  0                     &   \ddots   &   0  \\
\vdots &    \vdots      &     \ddots            &     \ddots &  0  \\
p_n    &  p_{n-1}       &  \cdots               & p_{0}    &   0      \\
0      & p_n            &  p_{n-1}              & \cdots    & p_{0}         \\
0      & -p'_n          &  -p'_{n-1}           & \cdots    & -p'_{0}  
\end{pmatrix}
,\qquad 
\vec{s}=\begin{pmatrix}
s_0\\
s_1\\
\vdots\\
s_{n+1}
\end{pmatrix}
,\qquad 
\vec{b}=\begin{pmatrix}
0\\
0\\
\vdots\\
\vdots\\
0\\
1
\end{pmatrix}.
$$
By the Farkas Lemma, \eqref{eq_linear_system} has a solution if and only if there is no way to combine the inequalities with non-negative coefficients to get the contradictory inequality $0\geq 1$. Formally, there must be no $(\lambda_0,\ldots,\lambda_{n+2}) \in \R_{\geq 0}^{n+2}$ such that 
\begin{equation}\label{eq_dual}
\left\{
\begin{array}{cc}
A^T \vec{\lambda}&=\vec{0}\\
\langle\vec{b},\vec{\lambda} \rangle&=1.
\end{array}
\right.
\end{equation}
Here $\lambda_l\geq 0$ is interpreted as the weight of the inequality~$l$ in the original system, and  $A^T$ denotes the transposed matrix $A$. Let us write down the equations~\eqref{eq_dual} explicitly:
$$
\left\{\begin{array}{rccr}
p_0\lambda_0+\ldots +p_n \lambda_n &=&0 &\\
\left(p_0\lambda_k+\ldots+p_{n-k}\lambda_{n}\right)+p_{n-k+1}\lambda_{n+1}-p'_{n-k+1}\lambda_{n+2}&=& 0, &k=1,\ldots, n+1\\ 
\lambda_{n+2}&=&1&
\end{array}
\right.
$$
Since all coefficients of $p$ are assumed to be positive and $\lambda_k\geq 0$ for all $k$, the first equation implies $\lambda_0=\lambda_1=\ldots=\lambda_n=0$. Hence, the second family of equations gives $p_{n-k+1}\lambda_{n+1}-p'_{n-k+1}\lambda_{n+2}=0$ for $k=1,\ldots, n+1$. Since $p$ and $p'$ are non-zero and not equal, these identities can only hold if $\lambda_{n+1}=\lambda_{n+2}=0$. Since $\lambda_{n+2}=1$ by the last equation, we conclude that the system~\eqref{eq_dual} does not have a solution. Thus the system~\eqref{eq_linear_system} has a solution and the polynomial $s(x)$ exists.

We now argue that the polynomial $q \in \cM$ defined (up to normalization) by 
$$q(x)=s(x)+C\cdot x^{n+2}$$
 has the desired properties for a large enough constant $C>0$. Indeed, the coefficients of $x^k$ with $k=0,\ldots, n+1$ in the products $p\cdot q$ and $p'\cdot q$ are the same as in $p\cdot s$ and $p'\cdot s$. Hence, $(p'\cdot q)_{n+1}\leq -1$ and so $p'\cdot q\notin \cP$. It remains to be shown that we can always choose $C$ so that $(p\cdot q)_k\geq 0$ for $k=n+2,\ldots 2n+2$. For such $k$, we get 
$$(p\cdot q)_k=(p\cdot s)_k+ C\cdot p_{k-(n+2)}$$
Since all the coefficients of $p$ are strictly positive, choosing  
$$C=\max_{k=n+2,\ldots 2n+2}\frac{(p\cdot s)_k}{p_{k-(n+2)}}$$
ensures that $p\cdot q \in \cP$. As $p\cdot q \in \cP$ and $p\in \cP$, we conclude that the constructed polynomial $q$ belongs to $\cM$ and complete the proof.
\end{proof}

The next lemma strengthens the previous, again considering $p,p'$ such that $\deg(p') \leq \deg(p)$ and $p_0 \neq 0$. It shows that when $p \neq p'$, then not only is $S_p$ not contained in $S_{p'}$, but moreover, the interior of $S_p$ is not contained in $S_{p'}$. Recall that our topology is the one under which  $\lim_t p^{(t)} = p$ if $\lim_t \deg(p^{(t)}) = \deg(p)$ and $\lim_t p^{(t)}_k = p_k$ for all $k$. Note that $S_p$ trivially has a non-empty interior because, for any $p$, it contains $\cP$, which has a non-empty interior.  The lemma implies that the interior of $S_p$ extends non-trivially beyond that.\begin{lemma}\label{lm:cone_interior}
Consider $p, p'\in \cP$ such that $\deg (p')\leq\deg (p)$ and  $p_0\ne 0$. Then $p = p'$ if and only if the interior of $S_p$ is contained in $S_{p'}$.
\end{lemma}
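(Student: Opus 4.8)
The goal is to upgrade Lemma~\ref{lm:cone} from ``$S_p\not\subseteq S_{p'}$'' to ``$\mathrm{int}(S_p)\not\subseteq S_{p'}$'' whenever $p\ne p'$ (the other direction being trivial, since $p=p'$ gives $S_p=S_{p'}\supseteq\mathrm{int}(S_p)$). The natural approach is to take the witness polynomial $q\in S_p\setminus S_{p'}$ produced by Lemma~\ref{lm:cone} and perturb it into the interior of $S_p$ without entering $S_{p'}$. Concretely, I would revisit the proof of Lemma~\ref{lm:cone}: there $q(x)=s(x)+C\cdot x^{n+2}$, where $s$ solves the strict-enough linear system making $(p\cdot s)_k\ge 0$ for $k\le n+1$ while $(p'\cdot s)_{n+1}\le -1$. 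The key observation is that the ``bad'' inequality $(p'\cdot q)_{n+1}\le -1$ is \emph{strict} with room to spare, and similarly one can ask the Farkas argument to deliver $(p\cdot s)_k$ \emph{strictly} positive (or at least controllably so) for the relevant indices. A polynomial lies in $\mathrm{int}(S_p)$ iff all coefficients of $p\cdot q$ are strictly positive \emph{and} $q$ has positive coefficients in the degrees where $p\cdot q$ would otherwise be forced to vanish — more carefully, $\mathrm{int}(S_p)$ consists of those $q\in\cM(\N)$ of the ``expected'' degree all of whose coefficients are nonzero and for which $p\cdot q\in\cP(\N)$ with every coefficient strictly positive.

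\textbf{Key steps, in order.} First, I would characterize $\mathrm{int}(S_p)$ explicitly in the coefficient-plus-degree topology: $q$ lies in the interior of $S_p$ (within $\cM(\N)$) precisely when $q_k>0$ for $k=0,\dots,\deg(q)$ and $(p\cdot q)_k>0$ for all $k=0,\dots,\deg(p)+\deg(q)$. (One direction is clear; for the other, small perturbations of such a $q$ still have all coefficients positive and hence stay in $\cP(\N)$ after multiplying by $p$, so stay in $S_p$.) Second, as in Lemma~\ref{lm:cone}, reduce to the case where all coefficients of $p$ are strictly positive by replacing $p,p'$ with $p\cdot(x+1)^n$, $p'\cdot(x+1)^n$ — here I must check that if $q'\in\mathrm{int}(S_{p\cdot(x+1)^n})\setminus S_{p'\cdot(x+1)^n}$ then $q'\cdot(x+1)^n\in\mathrm{int}(S_p)\setminus S_{p'}$, which holds because multiplying by $(x+1)^n$ is a homeomorphism onto its image that sends interior points to interior points. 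Third, re-run the Farkas argument from Lemma~\ref{lm:cone} but with the strengthened target: find $s$ with $(p\cdot s)_k\ge 0$ for $k\le n+1$ and $(p'\cdot s)_{n+1}\le -1$; then set $q(x)=s(x)+C x^{n+2}$ with $C$ large, exactly as before, so that $q\in S_p\setminus S_{p'}$ — and now additionally add a small positive multiple of $(x+1)^{\deg q}$, i.e.\ consider $q_\varepsilon = q + \varepsilon(x+1)^{\deg q}$ (all coefficients positive), which pushes all coefficients of $q$ and of $p\cdot q$ strictly positive while changing $(p'\cdot q_\varepsilon)_{n+1}$ by only $O(\varepsilon)$, hence keeping it $<0$ for $\varepsilon$ small. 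This $q_\varepsilon$ then lies in $\mathrm{int}(S_p)\setminus S_{p'}$.

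\textbf{Main obstacle.} The delicate point is the third step: after perturbing by $\varepsilon(x+1)^{\deg q}$ I must be sure that (a) $q_\varepsilon$ still has the ``expected'' degree and no positive roots (so $q_\varepsilon\in\cM(\N)$), which is automatic since adding a polynomial with positive coefficients to one with the leading behaviour of $C x^{n+2}$ keeps it positive on $x>0$; (b) \emph{all} coefficients of $p\cdot q_\varepsilon$ — not just those up to degree $n+1$ — are strictly positive, which requires choosing $C$ first (to dominate the $(p\cdot s)_k$ for $k=n+2,\dots,2n+2$) and then noting that the extra $\varepsilon(x+1)^{\deg q}$ term only helps, since $p$ has all positive coefficients; and (c) the bad coefficient stays bad: $(p'\cdot q_\varepsilon)_{n+1}=(p'\cdot s)_{n+1}+\varepsilon\,(p'\cdot(x+1)^{\deg q})_{n+1}\le -1+\varepsilon\,M<0$ for $\varepsilon<1/M$. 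The only real subtlety is bookkeeping the order of quantifiers ($s$ first, then $C$, then $\varepsilon$) and confirming the interior characterization in step one is the correct one for the given topology; none of this should present a genuine difficulty, so the lemma follows by a careful but routine strengthening of the previous proof.
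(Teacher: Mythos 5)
Your plan is essentially the paper's proof: take the witness $q\in S_p\setminus S_{p'}$ produced by Lemma~\ref{lm:cone} and perturb it to $q_\eps=q+\eps(x+1)^{\deg q}$, so that all coefficients of $p\cdot q_\eps$ become strictly positive (hence $q_\eps$ is an interior point of $S_p$), while by continuity the negative coefficient of $p'\cdot q_\eps$ survives for $\eps$ small. The paper does exactly this, but without re-running the Farkas argument and without your step~2: it uses the Lemma~\ref{lm:cone} witness as a black box and observes that $p\cdot q_\eps$ has strictly positive coefficients directly (this works because $\deg q\ge \deg p$ and $p_0,p_{\deg p}>0$, so $p(x)\cdot(x+1)^{\deg q}$ already has all coefficients strictly positive), so no reduction to a $p$ with all positive coefficients is needed.

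Two points in your write-up need repair, though both are local. First, the justification of step~2 is wrong as stated: multiplication by $(x+1)^n$ is a homeomorphism onto its image, but that image (polynomials divisible by $(x+1)^n$) has empty interior in $\cM(\N)$, so the map does not send interior points of $S_{p\cdot(x+1)^n}$ to interior points of $S_p$. What rescues the step is your own sufficient criterion: all coefficients of $p\cdot\bigl(q_\eps(x+1)^n\bigr)=\bigl(p(x+1)^n\bigr)\cdot q_\eps$ are strictly positive, which makes $q_\eps(x+1)^n$ interior in $S_p$ directly; alternatively, drop the reduction altogether as the paper does. Second, your step-one characterization of $\mathrm{int}(S_p)$ is not an ``iff'' (strict positivity of the coefficients of $p\cdot q$ alone already suffices, and positivity of the coefficients of $q$ is not necessary), and the claim that $q_\eps\in\cM(\N)$ is ``automatic from the leading behaviour'' is not an argument -- but positivity of $q$ on $(0,\infty)$ is already guaranteed by Lemma~\ref{lm:cone} (its witness lies in $\cM(\N)$), and adding $\eps(x+1)^{\deg q}$ preserves it. With these repairs your argument is correct and coincides with the paper's; only the sufficiency direction of your interior criterion is ever used.
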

\begin{proof}
One direction is again immediate. For the other direction, we need to show that if $M$ is a dense subset of $\cM$, then $p \neq p'$ implies that there is an element of~$M$ in  $S_p \setminus S_{p'}$.

Fix $p \neq p'$ with $p_0 \neq 0$ and $\deg(p') \leq \deg(p)$. By Lemma~\ref{lm:cone}, there is $q\in \cM$ such that $q \in S_p \setminus S_{p'}$, i.e., $p\cdot q$ has non-negative coefficients, but $p'\cdot q$ has a negative coefficient.  

Denote by $m$ the degree of $q$ and consider $r(x)=(x+1)^m$. For $\varepsilon>0$, define $q^\varepsilon=q+\varepsilon \cdot r$. Note that $p\cdot q^\varepsilon$ has strictly positive coefficients. Since the coefficients of a product depend continuously on the multiplicands, we can choose small $\varepsilon>0$ such that $p'\cdot q^\varepsilon$ has a negative coefficient. 

Let $q^{(t)}\in M$, $t=1,2,\ldots$, be a sequence of polynomials of degree $m$ such that 
$q^{(t)}_k\to q_k^\varepsilon$ as $t\to\infty$ for all $k=0,1,\ldots, m$.
Using the continuity of the product again, we conclude that, for large enough $t$, all the coefficients of $p\cdot q^{(t)}$ are strictly positive, but $p'\cdot q^{(t)}$ has a negative coefficient.
\end{proof}

\begin{proof}[Proof of Proposition~\ref{prop:auto-cont}]

We first consider a polynomial  $p\in M'\cap \cP$, prove that $\Phi[p]=p$, and then extend the conclusion to all $p\in M'$. 

We assume without loss of generality that $p_0 \neq 0$. Indeed, if $p(x) = x^m \cdot r(x)$ for some $r$ with $r_0 \neq 0$, then, by the richness hypothesis, $x$ is contained in $M$ and $r$ is contained in $M'$. Hence,  $\Phi[p] = \Phi[x]^m \cdot \Phi[r] = x^m \cdot \Phi[r]$ and proving $\Phi[r]=r$ would imply $\Phi[p]=p$. 

Towards a contradiction, assume that 
$p\ne \Phi[p]$ and denote $p'=\Phi[p]$. As $\Phi$ is degree-preserving, $\deg(p')=\deg(p)$. By Lemma~\ref{lm:cone_interior}, there is a polynomial $q\in M$ such that $q \in S_p$ and $q \not \in S_{p'}$. In other words, $p\cdot q$ has non-negative coefficients, and  $p'\cdot q$ has a negative coefficient. 

Since $q \in M$, by the proposition hypothesis, $\Phi[q]=q$ and thus
\begin{align*}
p'\cdot q= \Phi[p]\cdot\Phi[q]=\Phi[p\cdot q].
\end{align*}
Since $p\cdot q$ has non-negative coefficients, so does $\Phi[p\cdot q]$ because $\Phi$ maps $M'\cap \cP$ to $\cP$. We get that $p'\cdot q$ has no negative coefficients and
reach a contradiction. Thus $p'=p$ or, equivalently, $\Phi[p]=p$ for $p\in M'\cap \cP$.

It remains to show that $\Phi[p]=p$ for $p\in M'\setminus \cP$. By the density of $M$, there is a constant $a>0$ such that $q(x)=x+a$ belongs to $M$. By P\'olya's Lemma (Lemma~\ref{lem:polya-r}), $p(x)\cdot (x+a)^n$ has non-negative coefficients for some $n$ large enough. Denote $q(x)=(x+a)^n$. Both multiplicands in $p\cdot q$ belong to $M'$, so the product also does. Hence, $p\cdot q\in M'\cap \cP$. By the already proved statement, $\Phi[p\cdot q]=p\cdot q$. On the other hand, $\Phi[p\cdot q]=\Phi[p]\cdot \Phi[q]$. Since $q\in M$, we get $\Phi[q]=q$ and conclude that $\Phi[p]=p$.
\end{proof}

\bibliography{refs}

\end{document}